\newtheorem{theorem}{Theorem}[section]
\newtheorem{lemma}[theorem]{Lemma}
\newtheorem{proposition}{Proposition}[section]
\newtheorem{corollary}[theorem]{Corollary}
\theoremstyle{definition}
\theoremstyle{remark}
\newtheorem{remark}[theorem]{Remark}
\numberwithin{equation}{section}
\newcommand{\bbC}{\mathbb{C}}
\newcommand{\bbR}{\mathbb{R}}
\newcommand{\bbN}{\mathbb{N}}
\newcommand{\bbZ}{\mathbb{Z}}
\newcommand{\cE}{\mathcal{E}}
\newcommand{\cP}{\mathcal{P}}
\newcommand{\cH}{\mathcal{H}}
\newcommand{\cS}{\mathcal{S}}
\newcommand{\cU}{\mathcal{U}}
\newcommand{\cV}{\mathcal{V}}
\newcommand{\cZ}{\mathcal{Z}}
\newcommand{\ev}{\mathop{\mathrm{ev}}\nolimits}
\newcommand{\card}{\mathop{\mathrm{card}}}
\newcommand{\SO}{\mathop{\mathrm{SO}}}
\newcommand{\so}{\mathop{\mathrm{so}}}
\newcommand{\one}{{\mathord{\mathbf1}}}
\newcommand{\al}{{\mathord{\alpha}}}
\newcommand{\be}{{\mathord{\beta}}}
\newcommand{\ga}{{\mathord{\gamma}}}
\newcommand{\Ga}{{\mathord{\Gamma}}}
\newcommand{\vf}{{\mathord{\varphi}}}
\newcommand{\si}{{\mathord{\sigma}}}
\newcommand{\vk}{{\mathord{\varkappa}}}
\newcommand{\la}{{\mathord{\lambda}}}
\newcommand{\De}{{\mathord{\Delta}}}
\newcommand{\ep}{{\mathord{\varepsilon}}}
\newcommand{\ze}{{\mathord{\zeta}}}
\newcommand{\frg}{\mathord{\mathfrak{g}}}
\newcommand{\frh}{{\mathord{\mathfrak{h}}}}
\newcommand{\scal}[2]{\left<#1,#2\right>}
\newcommand{\dist}{\mathop{\mathrm{dist}}\nolimits}
\newcommand{\sfE}{{\mathsf{E}}}
\let\td=\tilde
\let\wtd=\widetilde
\begin{document}

\title[Decomposition of the Kostlan--Shub--Smale model]
{Decomposition of the Kostlan--Shub--Smale model for random polynomials}


\author{V. Gichev}
\address{Sobolev Institute of Mathematics\\
Omsk Branch\\
ul. Pevtsova, 13, \\ 644099, Omsk, Russia}
\email{gichev@ofim.oscsbras.ru}
\curraddr{}
\thanks{The author was partially supported by
the grant of the Norwegian Research Council \#204726/V30}


\subjclass[2010]{Primary 60H25,  60G60; Secondary  43A85}

\date{}

\begin{abstract}
Let $\cP_n$ be the space of homogeneous polynomials of degree $n$ on $\bbR^{m+1}$.
We consider the asymptotic behavior of some coefficients relating to the decomposition
of $\cP_n$ into the sum of $\SO(m+1)$-irreducible components. Using the results,
we prove that a random Kostlan--Shub--Smale polynomial $u\in\cP_n$ can be
approximated by polynomials of lower degree in the Sobolev spaces $H^k(S^m)$
on the unit sphere $S^m$ with small error and probability close to $1$. For example, if
$l_n>\sqrt{(m+2k+8\ep)n\ln n}$, then the inequality
$\dist(u,\cP_{l_n})<An^{-\ep}\|u\|$ holds for any sufficiently large $n$ with probability
greater than $1-Bn^{-2\ep}$, where $\dist$ and $\|\ \|$ are the distance and norm in
$H^k(S^m)$,  respectively,  $\ep\in(0,1)$,
and $A,B$ depend only on $m$ and $k$. If $l_n>\ep n$, then both the approximation
error and the deviation of probability from $1$ decay exponentially.

\end{abstract}

\maketitle


\section{Introduction}

Let $G$ be a compact Lie group acting on a Riemannian manifold $M$
by isometries, $\cE$ be a finite dimensional $G$-invariant
subspace of $C^\infty(M)$. For $u\in{\cE}$, set
$N_u=u^{-1}(0)$. Let $\sigma$ be a $G$-invariant probability
measure on $\cE$. Then the Hausdorff measure $\frh^{m-1}(N_u)$ of
$N_u$, where $m=\dim M$, is a random variable whose distribution
depends on $\sigma$. There are other metric quantities which can
be considered in this setting, for example, the Euler
characteristic of $N_u$ (see \cite{Po99}, \cite{Bu07}). Most of
the known results were proved for the Gaussian distributions in
$\cE$ and the uniform distribution in the unit sphere in $\cE$.

We consider the case $G=\SO(m+1)$, $M=S^m=\SO(m+1)/\SO(m)$,
the unit sphere in $\bbR^{m+1}$, and $\cE=\cP_n$, the space of real
homogeneous of degree $n$ polynomials on $\bbR^{m+1}$. Let $\cH_j$
be the subspace of all harmonic polynomials in $\cP_j$. There is
the well known
 $\SO(m+1)$-invariant decomposition
\begin{eqnarray}\label{harmdec}
\cP_n=\sum_{j\in J_n}|x|^{n-j}\cH_{j},
\\
\label{defjnz}
J_n=\{j\in\bbZ:\,0\le j\leq n,~n-j~\,\text{even}\}.
\end{eqnarray}
The restriction onto $S^m$ is an injective
map on the space $\cP_n$. The space $\sum_{j\in J_n}\cH_j$ has the
same property since it consists of harmonic polynomials. By (\ref{harmdec}),
it has the same traces on $S^m$.
We denote by $\cP_n$, $\cH_j$ the function
spaces on $S^m$ as well as on
$\bbR^{m+1}$ hoping that no confusion will occur. Due to this
convention,
\begin{eqnarray*}
\cP_j\subseteq\cP_n~~~~\hbox{\rm if}~~j\in J_n.
\end{eqnarray*}
The investigation of random polynomials was initiated by papers
\cite{BP} by Bloch and Polya, and \cite{LO38}, \cite{LO39} by
Littlewood and Offord. In \cite{Kac}, M.\,Kac proved an exact
integral formula for the expectation of the number of real zeroes
of random polynomials of one variable with standard Gaussian
coefficients (i.e., having expectation $0$ and variance $1$).
Kostlan (see \cite{Ko93}, \cite{EK95}) found a geometric proof of
this formula. For any $a\in\bbR^{n+1}$, common points of the
hyperplane $a^\bot$ and the moment curve $\ga(x)=(1,x,\dots,x^n)$
are in one-to-one correspondence with zeroes of the polynomial
$\scal{a}{\ga(x)}$. The same is true for the central projection
$\td\ga(x)=\frac{\ga(x)}{|\ga(x)|}$ of $\ga$ onto the unit sphere
$S^n$. Set  $f(a)=\card(a^\bot\mathop\cap\td\ga)$. Since $f(a)$ is
homogeneous of degree $0$, we can compute the expectation of the
number of zeroes integrating $f$ over $S^n$; on the other hand,
the integral is proportional to the length of $\td\ga$ due to a
Crofton type formula.

This method can be extended to the other function spaces and
inner products. Kostlan noted that the distribution on the space
of polynomials whose coefficients are independent Gaussian with
the variance ${{n}\choose{j}}$ at $x^j$ has a hidden symmetry: it
can be lifted onto the space of homogeneous degree $n$ polynomials
of two variables as an $\SO(3)$-invariant distribution. In
\cite{Ko93}, Kostlan found the expectation of the volume of 
solutions to a random system of equations $u_j(x)=0$, where
$u_j\in\cP_n$ are independent random polynomials, $j=1,\dots,k$,
where $k\leq m$. It is proportional to $n^{\frac{k}{2}}$. If $k=m$,
then it is the mean number of solutions and is equal to $n^{\frac{m}{2}}$. 
Shub and Smale in the paper \cite{SS93} extended this result onto the case 
of different degrees: the expectation of the number of solutions to the system
$u_1(x)=\dots=u_m(x)=0$, $u_j\in\cP_{n_j}$, is equal to 
$\sqrt{n_1\dots n_m}$.

In the paper \cite{Po99}, Podkorytov introduced a parameter of a
Gaussian $\SO(m)$-invariant distribution in $\cP_n$ and found an
explicit formula for the expectation of the Euler characteristic
of $N_u$ which is a function of this parameter.
{
In higher codimensions}
(i.e., for the varieties
$N_{u_1}\mathop\cap\dots\mathop\cap N_{u_k}$, where $k\leq m$),
B\"urgisser computed the expectation of the Euler characteristic
in the paper \cite{Bu07}. His proof involves Weyl's tube formula.

The space $M=G/H$ is called isotropy irreducible if $H$ is irreducible in $T_oM$, where
$o$ is the base point of $M$ corresponding to $H$. Such a space admits the
unique up to a scaling factor invariant Riemannian metric. Hence any equivariant
non-constant mapping of $M$ into a Riemannian $G$-manifold is a
finite covering and a local metric homothety onto its image. We assume $M$
 isotropy irreducible in what follows.

The $G$-invariant inner product in $\cE$ defines two parameters
relating to the geometry of the evaluation mapping $\ev_{\cE}:\,M\to\cE$ (see Section~2
for precise definitions). We denote them by $c$ and $s$ throughout the paper.
The mapping $\ev$ is an immersion of  $M$ into the sphere of radius $c$:
\begin{eqnarray}\label{defcevp}
c=|\ev_{\cE}(p)|,~~p\in M.
\end{eqnarray}
Thus, $\iota=\frac1c\ev_{\cE}$ is a non-constant
mapping  into the unit sphere $\cS$ in $\cE$.  Let $s$ denote the coefficient of
local metric homothety for $\iota$:
\begin{eqnarray}\label{defss}
s=\frac{|d_p\iota(v)|_{\cE}}{|v|_{T_pM}},
\end{eqnarray}
This parameter was  introduced in \cite{Gi08}. For Gaussian distributions,
$s^2$ is the Podkorytov parameter.

The parameters $c$ and $s$ are essential ingredients in the formulas for the
averages and fluctuations of some geometric quantities.  Here is an example. Locally,
the mapping $\iota$ multiplies the $k$-dimensional Hausdorff measure $\frh^{k}$
on $s^k$.   This makes it possible to compute or estimate Hausdorff measure of
a set $X$ in $M$ applying  Federer's kinematic formula for spheres  (\cite[Theorem~3.2.48]{{Fe69}}) to $\iota(X)$. Let $X=N_{u}$, $u\in\cS$.
Integrating over $\cS$ we get  $\sfE\big(\frh^{m-1}(N_u)\big)=
\frac{\varpi\varpi_{m-1}}{\varpi_{m}}s$, where $\varpi$ and
$\varpi_k$ are volumes of $M$ and the unit sphere $S^k$ in
$\bbR^{k+1}$, respectively. For the expectation of measures of the
intersections of the sets $N_u$, there is a similar expression
with the product of the coefficients of metric homothety.

Sometimes, it is possible to find $s$ following the
definition (i.e., applying (\ref{defss})). This is the case in the
Kostlan--Shub--Smale model (see Section~\ref{seccoepn} for
the definition). If $\cE$ is an eigenspace of the
Laplace--Beltrami operator $\De_{M}$ and $\la$ is the eigenvalue of $-\De_{M}$,
then
\begin{eqnarray}\label{sbylam}
s=\sqrt{\frac{\la}{m}}
\end{eqnarray}
independently of the choice of the invariant inner product.
Since $M$ is isotropy irreducible,
any $G$-irreducible component of $\cE$ is an eigenspace of $\De_M$. In
general, $s$ depends on the irreducible components of $\cE$ and
the inner product in $\cE$. For the norm of $L^2(M)$ in $\cE$  the
answer is given in \cite[Lemma~1]{Gi13}: if
\begin{eqnarray}\label{decce}
\cE=\cE_1\oplus\dots\oplus\cE_l,
\end{eqnarray}
where the summands are $G$-irreducible eigenspaces of $\De_{M}$, then
\begin{eqnarray}\label{ssqunu}
s^2=\nu_1s_1^2+\dots+\nu_ls_l^2,
\end{eqnarray}
where $s_j$ is the coefficient of metric homothety for $\cE_j$,
which is subject to (\ref{sbylam}), and
$\nu_j=\frac{\dim\cE_j}{\dim\cE}$, $j=1,\dots,l$.
{In Section~\ref{seccoef}, we show that (\ref{ssqunu}) holds for
any invariant inner product in $\cE$ and $\nu_j=\frac{c_{j}^{2}}{c^{2}}$,
where $c_{j}$ and $c$ are the parameters for $\cE_{j}$ and $\cE$, respectively.
The coefficients $\nu_{j}$ depend on the choice of the inner product but $s_j$
are always subject to (\ref{sbylam}).}  This makes it is possible to find $c$ and
$s$ for the rescaling inner products in $\cE$. Section~2 contains
the preparatory material which can be used if $\cE$ is a finite dimensional
$G$-invariant function space with arbitrary spectrum on any isotropy irreducible
homogeneous space $M$. 

In the case $M=S^m$,  $\cE=\cP_n$, and the decomposition (\ref{harmdec}),
$s\sim\frac{n}{\sqrt{m+2}}$ as $n\to\infty$ for the $L^2(S^m)$-norm
in $\cE$. In the Kostlan--Shub--Smale model, $s=\sqrt{n}$ independently of $m$.
In Proposition~\ref{kss-l2}, we collect the formulas for the parameters $c$, $s$
in the spaces $\cP_{n}$ and $\cH_{j}$,  the coefficients $\nu_{j}$ and the rescaling
factors for the $L^{2}(S^{m})$ and  Kostlan--Shub--Smale ensembles. In the recent
paper \cite{FLL15}, Fyodorov, Lerario, and Lundberg found, among other results,
these factors and their scaling limit  as $n\to\infty$ which is equal to $e^{-\frac{x^{2}}{4}}$
up to a normalizing factor.

The coefficients $\nu_{j}$ have a peak near $\sqrt{(m-1)n}$ and
decay very fast when $j$ grows.
The peak can be localized in an interval of
length $\frac{m+5}{2}$ and is sharpening as $m$ grows.
In Theorem~\ref{limitnu}, we compute the scaling limit of $\nu_{j}$. Up to
a normalizing factor, it is equal to $\left(t^{2}e^{{1-t^{2}}}\right)^{{\frac{m-1}{2}}}$.

The main result of the paper
is Theorem~\ref{lower}. It implies that
a random Kostlan--Shub--Smale polynomial of degree $n$ admits a
good approximation in the Sobolev spaces by polynomials of
degree less than $\sqrt{(m+2k+\ep)n\ln n}$ with high
probability. For example, if $k=0$, $n$ is sufficiently large,
\begin{eqnarray*}
n>l_n>2\sqrt{mn\ln n},
\end{eqnarray*}
and $n-l_n$ is even, then the inequality
\begin{eqnarray*}
\dist\left(u,\cP_{l_n}\right)<\ep_n|u|
\end{eqnarray*}
holds with the probability greater than $1-\eta_n$, where
\begin{eqnarray*}
\ep_n=a n^{-\frac{m}{2}},~~~\eta_n=bn^{-\frac{m}{2}},
\end{eqnarray*}
$\dist$ stands for the distance in $L^2(S^m)$, and $a,b$ are
independent of $n$. If $l_n>\al n$, where $0<\al<1$, then $\ep_n$
and $\eta_n$ may both decay exponentially when $n$ grows.
Theorem~\ref{lower} provides .
This is not clear yet if $\sqrt{(m+2k)n\ln n}$ is actually
the critical rate of growth for the approximation in $H^{k}(S^{m})$
by polynomials of lower degree.
{
Since $\dim\cP_{n}$ grows as $n^{m}$ as $n\to\infty$, the measure
concentration phenomenon works in this situation. Hence the
inequality above holds with high probability due to the fast decay of
the coefficients $\nu_{j}$ .

Throughout the paper, we fix $m$ and drop it in the notation assuming
\begin{eqnarray}\label{assmn}
1<m<n.
\end{eqnarray}
We use the notation $|\ |$ for the Euclidean norms and $\scal{\
}{\ }$ for the corresponding inner product. The base point $o$ of $M=G/H$
is the class $H$. If $M$ is the unit sphere
$S^m$ in $\bbR^{m+1}$, then $o=(1,0,\dots,0)$. In the notation
$L^2(M)$, the invariant probability measure on $M$ is assumed.
Also, $du$, $dx$, etc. stands either for the Lebesgue measure in
an Euclidean space or for the invariant probability measure on a
compact homogeneous space (in particular, on $S^m$).

\subsubsection*{Acknowledgements} I am grateful to the unknown referee for
useful comments and for making me aware of the papers \cite{FLL15} and
\cite{Ko02}. A part of this work was done during my stay at the University of Bergen,
in the warm and friendly atmosphere created by Irina Markina and  Aleksandr (Sasha) 
Vasiliev.   
Sasha's untimely decease is an irreplaceable loss, he was a wonderful person and a gifted 
mathematician. This paper is dedicated to his memory.

\section{The coefficients corresponding to
invariant Euclidean structures}\label{seccoef}

Since $M$ is isotropy irreducible, the invariant Riemannian metric
on it is unique up to a scaling factor. Hence it is the quotient of
some bi-invariant metric on $G$. This implies that any $G$-invariant
finite dimensional function space on $M$ is $\De_M$-invariant (the
introduction in \cite{Gi13} contains more details; for
$M=S^m$ this is true because the summands in \ref{harmdec} are irreducible
and pairwise non-equivalent).
Hence the summands of the $G$-invariant orthogonal decomposition (\ref{decce})
are eigenspaces of $\De_{M}$.
Let $\la_j$ denote the eigenvalue of $-\De_{M}$ on $\cE_j$. We
assume $\cE_j\neq0$ for all $j=1,\dots,l$.
}

The evaluation mapping $\ev_{\cE}:\,M\to\cE$ is defined by the
identity
\begin{eqnarray}\label{defev}
\scal{\ev_{\cE}(p)}{u}=u(p),
\end{eqnarray}
where $p\in M$, $u\in\cE$. Then
$\iota(p)=\frac{\ev_{\cE}(p)}{|\ev_{\cE}(p)|}$.
Due to the homogeneity of $M$, $|\ev_{\cE}(p)|$ is independent of
$p$. Set
\begin{eqnarray}
\phi=\ev_{\cE}(o),\nonumber
\end{eqnarray}
and $c_j=|\ev_{\cE_j}(p)|$, $\phi_j=\ev_{\cE_j}(o)$, where
$j=1,\dots,l$. Clearly, $\phi=\phi_1+\dots+\phi_l$. Since $\phi_j$
are pairwise orthogonal, we get
\begin{eqnarray}\label{cjcjsq}
c^2=c_1^2+\dots+c_l^2,
\end{eqnarray}
where $c$ is defined by (\ref{defcevp})
Let $\frg$ be the Lie algebra of $G$.
\begin{lemma}\label{snusj}
Set $\nu_j=\frac{c_j^2}{c^2}$, and let $s_j$ be the coefficient of
the metric homothety for $\cE_j$, $j=1,\dots,l$. Then
\begin{eqnarray}\label{sumnul}
\nu_1+\dots+\nu_l=1,\\
\label{csnucjsj}
s^2=\nu_1s_1^2+\dots+\nu_ls_l^2.
\end{eqnarray}
\end{lemma}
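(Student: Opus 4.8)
The plan is to reduce both identities to the orthogonal splitting of the evaluation map along (\ref{decce}) and then to differentiate it. Identity (\ref{sumnul}) needs no work: dividing (\ref{cjcjsq}) by $c^2$ immediately gives $\nu_1+\dots+\nu_l=\frac{c_1^2+\dots+c_l^2}{c^2}=1$.

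For (\ref{csnucjsj}) I would first record that the evaluation map respects the decomposition. If $P_j\colon\cE\to\cE_j$ denotes the orthogonal projection, then for every $u\in\cE_j$ the defining identity (\ref{defev}) gives $\scal{P_j\ev_\cE(p)}{u}=\scal{\ev_\cE(p)}{u}=u(p)=\scal{\ev_{\cE_j}(p)}{u}$, whence $P_j\ev_\cE(p)=\ev_{\cE_j}(p)$ and
\begin{eqnarray*}
\ev_\cE(p)=\ev_{\cE_1}(p)+\dots+\ev_{\cE_l}(p)
\end{eqnarray*}
is an orthogonal sum for each fixed $p$. Since $|\ev_\cE(p)|=c$ by (\ref{defcevp}), we have $\iota=\frac1c\ev_\cE$; writing $\iota_j=\frac1{c_j}\ev_{\cE_j}$ for the normalized evaluation map of $\cE_j$, this reads $\ev_{\cE_j}=c_j\iota_j$.

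Next I would differentiate. As each $\cE_j$ is a fixed linear subspace, $d_p\ev_{\cE_j}(v)\in\cE_j$, so the vectors $d_p\ev_{\cE_1}(v),\dots,d_p\ev_{\cE_l}(v)$ are pairwise orthogonal. From
\begin{eqnarray*}
d_p\iota(v)=\frac1c\sum_{j=1}^l d_p\ev_{\cE_j}(v)=\frac1c\sum_{j=1}^l c_j\,d_p\iota_j(v)
\end{eqnarray*}
the Pythagorean theorem then gives $|d_p\iota(v)|^2=\frac1{c^2}\sum_{j=1}^l c_j^2\,|d_p\iota_j(v)|^2$. Because $M$ is isotropy irreducible, each $\iota_j$ is either constant (when $\cE_j$ consists of constants, so $s_j=0$) or a nonconstant equivariant map into the unit sphere of the irreducible space $\cE_j$, hence a local metric homothety with a constant factor $s_j$; the same applies to $\iota$ with factor $s$. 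Substituting $|d_p\iota(v)|=s|v|$ and $|d_p\iota_j(v)|=s_j|v|$ and dividing by $|v|^2$ yields $s^2=\frac1{c^2}\sum_{j=1}^l c_j^2 s_j^2$, which is (\ref{csnucjsj}) after inserting $\nu_j=\frac{c_j^2}{c^2}$.

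The one step that genuinely needs justification is the vanishing of the cross terms in the Pythagorean expansion, that is, the orthogonality of the differentials $d_p\ev_{\cE_j}(v)$; this is exactly where the orthogonality of the summands in (\ref{decce}) enters, through the observation that each component map takes values in a fixed, mutually orthogonal subspace. The constancy of the homothety factors $s$ and $s_j$, which lets me pull them out of the derivative norms in (\ref{defss}), is the consequence of isotropy irreducibility recalled just before the lemma, so nothing further is required there.
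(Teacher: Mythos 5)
Your proof is correct and follows essentially the same route as the paper: both arguments split the evaluation vector orthogonally along (\ref{decce}), differentiate, observe that each component's derivative stays inside its own summand $\cE_j$ so that Pythagoras applies, and then insert the constant homothety factors from (\ref{defss}). The only cosmetic difference is that the paper differentiates along vector fields $\xi(p)$, $\xi\in\frg$, writing the derivative as the Lie algebra action $\xi\phi_j\in\cE_j$ at the base point, whereas you differentiate the component evaluation maps with respect to an arbitrary tangent vector; the two computations are identical in substance.
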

\begin{proof} The equality (\ref{sumnul}) follows from (\ref{cjcjsq}).
For any $\xi\in\frg$, we have
$d_p\iota(\xi(p))=\frac{1}{c}\xi\ev_{\cE}(p)$ and, according to
(\ref{defss}),
\begin{eqnarray}\label{xiphi}
cs|\xi(o)|_{T_oM}=|\xi\phi|_{\cE}.
\end{eqnarray}
Similarly, $s_jc_j|\xi(o)|_{T_oM}=|\xi\phi_j|_{\cE_j}$. Since
$\xi\phi_j\in\cE_j$,
\begin{eqnarray*}
|\xi\phi|_{\cE}^2=|\xi\phi_1+\dots+\xi\phi_l|^2_{\cE}
=|\xi\phi_1|_{\cE_1}^2+\dots+|\xi\phi_l|_{\cE_l}^2
=(c_1^2s_1^2+\dots+c_l^2s_l^2)|\xi(o)|_{T_oM}.
\end{eqnarray*}
Together with (\ref{xiphi}), this implies (\ref{csnucjsj}).
\end{proof}

Let $\scal{\ }{\ }$, $\wtd{\scal{\ }{\ }}$ be $G$-invariant inner
products in $\cE$, $\cS$ and $\wtd\cS$ be the corresponding unit
spheres in $\cE$, respectively. We shall endow with the tilde the
notation for relating objects. We assume additionally that there
are $\tau_j>0$, $j=1,\dots,l$, such that for all $u,v\in\cE$
\begin{eqnarray}\label{nnorms}
\wtd{\scal{u}{v}}=\tau_1^{-1}\scal{u_1}{v_1}+
\dots+\tau_l^{-1}\scal{u_l}{v_l},
\end{eqnarray}
where $u_j,v_j$ are components of $u,v$ in the decomposition
(\ref{decce}). The assumption holds if the summands in
(\ref{decce}) are pairwise non-equivalent as $G$-modules. This is true
if $M=S^m$.
\begin{lemma}\label{eqisct}
For any $j=1,\dots,l$, the following equalities hold:
\begin{eqnarray*}
\begin{array}{r}
\td c_j^2=\tau_j c_j^2 ,\\
\td\phi_j=\tau_j\phi_j,\\
\td s_j=s_j=\sqrt{\frac{\la_j}{m}},
\end{array}
\end{eqnarray*}
and, moreover, $\td s^2=\frac{\tau_1 c_1^2}{\td c^2}s_1^2+
\dots+\frac{\tau_l c_l^2}{\td c^2}s_l^2$, where $\td c^2=\tau_1
c_1^2+\dots+\tau_l c_l^2$.
\end{lemma}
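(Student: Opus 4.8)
The plan is to work directly from the reproducing identity (\ref{defev}) that defines the evaluation map, exploiting the fact that the rescaling (\ref{nnorms}) is diagonal with respect to the decomposition (\ref{decce}): restricted to a single component $\cE_j$ it reads $\wtd{\scal{a}{b}}=\tau_j^{-1}\scal{a}{b}$ for $a,b\in\cE_j$, and it keeps the summands pairwise orthogonal. First I would pin down $\td\phi_j$. For every $u_j\in\cE_j$ the two evaluation maps satisfy $\scal{\phi_j}{u_j}=u_j(o)=\wtd{\scal{\td\phi_j}{u_j}}=\tau_j^{-1}\scal{\td\phi_j}{u_j}$. Since this holds for all $u_j\in\cE_j$ and both $\phi_j,\td\phi_j$ lie in $\cE_j$, I get $\td\phi_j=\tau_j\phi_j$, which is the second asserted identity. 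The formula for $\td c_j^2$ is then immediate: using homogeneity and the single-component form of (\ref{nnorms}), $\td c_j^2=\wtd{\scal{\td\phi_j}{\td\phi_j}}=\tau_j^{-1}\scal{\tau_j\phi_j}{\tau_j\phi_j}=\tau_j c_j^2$, and summing over $j$ as in (\ref{cjcjsq}) gives $\td c^2=\tau_1c_1^2+\dots+\tau_lc_l^2$.

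For the homothety coefficient I would reuse (\ref{xiphi}) from the proof of Lemma~\ref{snusj}, now applied to the tilded data on $\cE_j$, namely $\td s_j\td c_j|\xi(o)|_{T_oM}=\wtd{|\xi\td\phi_j|}_{\cE_j}$. The crucial point is that the infinitesimal $G$-action preserves $\cE_j$, so $\xi\td\phi_j=\tau_j\,\xi\phi_j$ still lies in $\cE_j$, and the single-component form of the new norm gives $\wtd{|\xi\td\phi_j|}_{\cE_j}=\tau_j^{1/2}|\xi\phi_j|_{\cE_j}$. Combining this with $\td c_j=\tau_j^{1/2}c_j$ and the untilded relation $s_jc_j|\xi(o)|_{T_oM}=|\xi\phi_j|_{\cE_j}$, the factors $\tau_j^{1/2}$ cancel and I obtain $\td s_j=s_j$; the value $\sqrt{\la_j/m}$ then comes from (\ref{sbylam}), valid because each $\cE_j$ is a $\De_M$-eigenspace and $s_j$ is inner-product independent.

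Finally, the formula for $\td s^2$ is just Lemma~\ref{snusj} applied to the new inner product. The summands of (\ref{decce}) remain $G$-irreducible $\De_M$-eigenspaces, orthogonal for $\wtd{\scal{\ }{\ }}$, so (\ref{csnucjsj}) yields $\td s^2=\td\nu_1\td s_1^2+\dots+\td\nu_l\td s_l^2$ with $\td\nu_j=\td c_j^2/\td c^2$. Substituting the already-established $\td c_j^2=\tau_jc_j^2$, $\td c^2=\sum_j\tau_jc_j^2$, and $\td s_j=s_j$ gives the claimed expression.

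I do not expect a genuine obstacle here; the one place demanding care is the bookkeeping of which objects are $\cE_j$-valued, since that is precisely what lets me replace the mixed new inner product by its scalar multiple $\tau_j^{-1}\scal{\ }{\ }$ on each component and thereby reduce every step to a short scaling computation.
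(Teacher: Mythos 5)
Your proof is correct and follows essentially the same route as the paper: both arguments reduce to the fact that on each irreducible component the two inner products differ by the scalar $\tau_j$, deduce $\td\phi_j=\tau_j\phi_j$ and $\td c_j^2=\tau_j c_j^2$ from the reproducing identity, obtain $\td s_j=s_j$ from (\ref{xiphi}), pin down the value $\sqrt{\la_j/m}$ via the $L^2$ case, and get the final formula for $\td s^2$ from Lemma~\ref{snusj} applied to the tilded inner product. The only cosmetic difference is that the paper states the scaling computation once for a globally proportional pair of inner products and then invokes the pairwise proportionality of invariant inner products on irreducible modules to apply it component-wise, whereas you work with the diagonal form (\ref{nnorms}) on each $\cE_j$ from the start.
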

\begin{proof}
Suppose $\scal{u}{v}=\tau\wtd{\scal{u}{v}}$ for all $u,v\in\cE$.
Due to the equalities
$u(o)=\wtd{\scal{u}{\td\phi}}=\scal{u}{\frac1{\tau}\td\phi}
=\scal{u}{\phi}$, we have $\td\phi=\tau\phi$ and $\td
c^2=\wtd{|\td\phi|}^2=\tau|\phi|^2=\tau c^2$ in this case. By
(\ref{xiphi}),
\begin{eqnarray*}
cs|\xi(o)|_{T_oM}=|\xi\phi|,\\ 
c\td s|\xi(o)|_{T_oM}=\wtd{|\xi\td\phi|}\phantom{!}
\end{eqnarray*}
for all $\xi\in\frg$. The equality $\td\phi=\tau\phi$ implies
$\wtd{|\xi\td\phi|}^2=\tau|\xi\phi|^2$ and, together with $\td
c^2=\tau c^2$ and the equalities above,  $\td s=s$. According to
\cite[Lemma~1]{Gi13}, in the case of $L^2(M)$-norm and $\cE$
irreducible we have $s=\sqrt{\frac{\la}{m}}$, where $\la$ is the
eigenvalue of $-\De_M$ on $\cE$ (in \cite{Gi13}, it is assumed that
$\cE\perp\one$ but for the space of constant functions the
equality $s=\sqrt{\frac{\la}{m}}$ is evidently true since
$\la=s=0$). Since the invariant inner products on irreducible
$G$-modules are pairwise proportional, the arguments above prove
the first three equalities. The remaining ones follow from
Lemma~\ref{snusj} and (\ref{cjcjsq}).
\end{proof}
\begin{corollary}
Let $\la_{\min}$ and $\la_{\max}$ be the least and largest
eigenvalues of $-\De_M$ in $\cE$, respectively. Suppose $\la_{\min}<
\la_{\max}$.  Then
\begin{eqnarray*}
\sqrt{\frac{\la_{\min}}{m}}< s<\sqrt{\frac{\la_{\max}}{m}}.
\end{eqnarray*}
Moreover, for any $s$ satisfying this inequality there is an
invariant Euclidean norm on $\cE$ whose coefficient of metric
homothety is equal to $s$.
\end{corollary}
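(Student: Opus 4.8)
The plan is to reduce the whole statement to the convexity identity (\ref{csnucjsj}) of Lemma~\ref{snusj}, combined with the evaluation $s_j^2=\la_j/m$ from Lemma~\ref{eqisct}, and then to exploit the freedom in the weights through the rescaling (\ref{nnorms}).

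First I would prove the two-sided inequality. Since $\cE_j\neq0$, the vector $\ev_{\cE_j}(o)$ cannot vanish: if it did, then $u(o)=0$ for every $u\in\cE_j$, and applying this to the $G$-translates $g\cdot u\in\cE_j$ together with the transitivity of $G$ on $M$ would force $u\equiv0$ on all of $M$, contradicting $\cE_j\neq0$. Hence $c_j=|\ev_{\cE_j}(o)|>0$ and $\nu_j=c_j^2/c^2>0$ for every $j$. By (\ref{sumnul}) and (\ref{csnucjsj}), together with $s_j^2=\la_j/m$, the quantity $s^2$ is a convex combination, with all coefficients $\nu_j$ strictly positive, of the numbers $\la_1/m,\dots,\la_l/m$. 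As $\la_{\min}<\la_{\max}$, these numbers are not all equal, and one positive weight multiplies $\la_{\max}/m$ while another multiplies $\la_{\min}/m$; hence the combination lies strictly between its smallest and largest entries,
\begin{eqnarray*}
\frac{\la_{\min}}{m}<s^2<\frac{\la_{\max}}{m},
\end{eqnarray*}
and taking square roots gives the claim.

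For the surjectivity part I would use Lemma~\ref{eqisct}. For arbitrary $\tau_1,\dots,\tau_l>0$, the rescaled inner product (\ref{nnorms}) has
\begin{eqnarray*}
\td s^2=\frac{\sum_{j=1}^l\tau_j c_j^2\,(\la_j/m)}{\sum_{j=1}^l\tau_j c_j^2},
\end{eqnarray*}
which is the weighted average of $\la_1/m,\dots,\la_l/m$ with strictly positive weights $w_j=\tau_j c_j^2$. Because $c_j>0$, prescribing $\tau_j>0$ is the same as prescribing $w_j>0$ freely, so it suffices to show that this weighted average takes every value in the open interval $(\la_{\min}/m,\la_{\max}/m)$ as $w$ ranges over the positive orthant of $\bbR^l$.

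This last step is the only place needing a little care, although I expect no genuine obstacle. The map $w\mapsto\td s^2$ is continuous on the connected set $\{w\in\bbR^l:w_j>0\}$, so its image is an interval, and by the first part it is contained in $(\la_{\min}/m,\la_{\max}/m)$. To see it fills the whole interval I would fix an index $i$ with $\la_i=\la_{\min}$ and an index $k$ with $\la_k=\la_{\max}$, keep every other weight equal to a common small parameter $\delta>0$, and let the ratio $w_k/w_i$ sweep from $0$ to $\infty$; this moves $\td s^2$ continuously, and as $\delta\to0$ the endpoints of its range tend to $\la_{\min}/m$ and $\la_{\max}/m$. Hence for any prescribed $t\in(\la_{\min}/m,\la_{\max}/m)$ one may first fix $\delta$ small enough that $t$ lies in the range and then invoke the intermediate value theorem. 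This realizes every $s$ with $\sqrt{\la_{\min}/m}<s<\sqrt{\la_{\max}/m}$ as the homothety coefficient of the invariant norm determined by the corresponding $\tau_j$. The entire content is thus the identity (\ref{csnucjsj}) and the positivity $c_j>0$; the only thing to watch is keeping all weights strictly positive while still approaching the open endpoints in the limit.
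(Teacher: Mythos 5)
Your proof is correct and takes essentially the same route as the paper: the strict inequalities come from the convex-combination identity (\ref{csnucjsj}) with all weights $\nu_j>0$, and the realizability of every intermediate $s$ comes from the rescaling formula of Lemma~\ref{eqisct} with the $\tau_j$ chosen freely. The paper's own proof is just terser, leaving implicit both the positivity of the $\nu_j$ and the intermediate-value argument that you spell out.
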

\begin{proof}
{
If $\cE_j$ contains a non-constant function, then $\la_j>0$ and, consequently, $\nu_j>0$.} Thus the equality (\ref{sumnul})
implies the inequalities above. Since $\tau_j$'s may be arbitrary
positive numbers, the second assertion of the corollary follows
from Lemma~\ref{eqisct}.
\end{proof}
The equivalent inequalities for $\SO(m+1)$-invariant Gaussian distributions
$\cP_n$ were stated without proof  in Podkorytov's paper  \cite{Po99}.


\section{The coefficients for the space of homogeneous polynomials}
\label{seccoepn} We refer to \cite[Chapter~5]{Ax01} for known
facts on harmonic polynomials. In this section, we find the
coefficients introduced in the previous one for the decomposition
(\ref{harmdec}) and the following Euclidean norms: the first, $|\
|$, is the norm of $L^2(S^m)$ for the invariant probability
measure and the second, $\wtd{|\ |}$, is defined by
\begin{eqnarray}\label{decph}
\wtd{\scal{x^\al}{x^\be}}=\begin{cases}\al!,&\al=\be
\\0,&\al\neq\be,
\end{cases}
\end{eqnarray}
where $\al=(\al_0,\al_1,\dots,\al_m)$, $\al!=\al_0!\dots\al_m!$,
and $x^\al=x_0^{\al_0}\dots x_m^{\al_m}$.
A short straightforward computation shows that the formula
\begin{eqnarray}\label{sympro}
\wtd{\scal{u}{v}}=u\left(\frac{\partial}{\partial x}\right)v
\end{eqnarray}
defines the same inner product in $\cP_n$ (notice that the
right-hand part is constant). To the best of my knowledge, it was
introduced in the the book \cite{St70} by E.\,Stein. The products
are $\SO(m+1)$-invariant. The Kostlan--Shub--Smale model
corresponds to the Gaussian distribution whose density is
proportional to $e^{-\wtd{|u|}^2}$ on $\cP_n$.

Let us use the notation of (\ref{harmdec}). On $S^m$, $\cH_j$ is the
eigenspace of the Laplace--Beltrami operator  $\De_{S^{m}}$
corresponding to the eigenvalue $-j(j+m-1)$. Set
$\la_j=j(j+m-1)$.  It is known that
$\dim\cP_n={{n+m}\choose{m}}$.
According to (\ref{harmdec}),
\begin{eqnarray}\label{dimhn}
\dim\cH_n={{n+m}\choose{m}}-{{n+m-2}\choose{m}}
=\frac{(m+n-2)!(m+2n-1)}{(m-1)!n!}
\end{eqnarray}
if $n\geq2$. Clearly, $\dim\cH_0=1$, $\dim\cH_1=m+1$ and,
moreover, the equalities hold for $n=0,1$ if we replace the
factorials with $\Ga$ and extend the right-hand side of
(\ref{dimhn}) analytically. For short, set
\begin{eqnarray*}
\vk(x)=|x|^2=x_0^2+x_1^2+\dots+x_m^2.
\end{eqnarray*}
\begin{lemma}\label{taukj}
Let $u\in\cH_j\setminus\{0\}$ and $k\geq0$ be integer. Then
\begin{eqnarray}\label{rescal}
\frac{\wtd{|\vk^k u|}^2}{|u|^2}= 2^k k!\prod_{i=1}^{j+k}(m+2i-1)
\end{eqnarray}
\end{lemma}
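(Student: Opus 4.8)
The plan is to split the statement into two independent pieces: an exact recursion for the apolar norm $\wtd{|\ |}$ under multiplication by $\vk$, and a single evaluation comparing $\wtd{|\ |}$ with the $L^2(S^m)$-norm $|\ |$ on the irreducible module $\cH_j$.

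First I would exploit that the substitution $p\mapsto p\!\left(\frac{\partial}{\partial x}\right)$ is multiplicative and $\vk\!\left(\frac{\partial}{\partial x}\right)=\De$, so multiplication by $\vk$ is adjoint to $\De$ for the inner product (\ref{sympro}): for homogeneous $w,v$ with $\deg v=\deg w+2$,
\[
\wtd{\scal{\vk w}{v}}=\wtd{\scal{w}{\De v}}.
\]
Combined with the classical identity, valid for harmonic $u$ of degree $j$ (Leibniz rule, Euler's relation $x\cdot\nabla u=ju$, and $\De u=0$),
\[
\De(\vk^k u)=2k\,(2j+2k+m-1)\,\vk^{k-1}u,
\]
this gives the one-step recursion $\wtd{|\vk^k u|}^2=2k(2j+2k+m-1)\,\wtd{|\vk^{k-1}u|}^2$. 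Iterating from $k$ down to $0$ yields
\[
\wtd{|\vk^k u|}^2=2^k k!\prod_{i=j+1}^{j+k}(m+2i-1)\cdot\wtd{|u|}^2 .
\]

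Second, I would pin down the base factor $\wtd{|u|}^2/|u|^2$ for $u\in\cH_j$. Both $\wtd{|\ |}^2$ and $|\ |^2$ are restrictions to $\cH_j$ of $\SO(m+1)$-invariant inner products, and $\cH_j$ is an absolutely irreducible real $\SO(m+1)$-module; by Schur's lemma the two forms are proportional, so $\wtd{|u|}^2=\ga_j|u|^2$ for a constant $\ga_j$ independent of $u$. To compute $\ga_j$ I would test on the single harmonic $u=(x_0+ix_1)^j$ (using the Hermitian extensions of both forms). Expanding in monomials and using (\ref{decph}) gives $\wtd{|u|}^2=\sum_a\binom{j}{a}^2(j-a)!\,a!=2^j j!$, while $|u|^2=\int_{S^m}(x_0^2+x_1^2)^j\,dx$, which I would evaluate through the Gaussian model of the uniform measure: writing $x=g/|g|$ with $g=(g_0,\dots,g_m)$ standard Gaussian, using independence of $|g|$ from the direction, and the moments of the $\chi^2$-distributions of $g_0^2+g_1^2$ and $|g|^2$, one gets $|u|^2=2^j j!\big/\prod_{i=1}^j(m+2i-1)$. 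Hence $\ga_j=\prod_{i=1}^j(m+2i-1)$, and combining the two pieces gives $\wtd{|\vk^k u|}^2/|u|^2=2^k k!\prod_{i=j+1}^{j+k}(m+2i-1)\cdot\prod_{i=1}^j(m+2i-1)=2^k k!\prod_{i=1}^{j+k}(m+2i-1)$, as claimed.

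The recursion is purely mechanical; the real work, and the main obstacle, is the base case, because it compares two genuinely different norms and so demands an honest evaluation rather than algebraic manipulation. Two points need care here. First, the test polynomial $(x_0+ix_1)^j$ is isotropic for the \emph{bilinear} extensions of both forms (the spherical integral of the degree-$2j$ harmonic $(x_0+ix_1)^{2j}$ vanishes, and the apolar pairing reduces to the alternating binomial sum $j!(1-1)^j=0$), so one must work with the Hermitian extensions. Second, the constancy of the ratio $\wtd{|u|}^2/|u|^2$ rests on the absolute irreducibility of $\cH_j$, which holds precisely because $m\ge 2$, that is, in the regime (\ref{assmn}) assumed throughout.
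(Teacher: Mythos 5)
Your proof is correct, and its first half coincides with the paper's argument: the paper also derives $\De(\vk^k u)=2k(m+2j+2k-1)\vk^{k-1}u$ and uses the adjointness built into (\ref{sympro}), namely $\wtd{|\vk^ku|}^2=\wtd{\scal{\vk^{k-1}u}{\De(\vk^k u)}}$, to obtain the same one-step recursion and then induct on $k$. Where you genuinely diverge is the base case $k=0$. The paper simply quotes \cite[Theorem~5.14]{Ax01} (formula (\ref{koltwo}) in the text), which states outright that for $u\in\cH_j$ the $L^2(S^m)$-norm equals the apolar norm divided by $(m+1)(m+3)\cdots(m+2j-1)$, so no further work is needed there. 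You instead prove this constant from scratch: Schur's lemma on the absolutely irreducible real $\SO(m+1)$-module $\cH_j$ gives proportionality of the two invariant inner products, and you pin down the constant by testing on $(x_0+ix_1)^j$, computing the apolar side combinatorially ($2^jj!$) and the spherical side via the Gaussian model, whose $\chi^2$-moments give $2^jj!\big/\prod_{i=1}^j(m+2i-1)$; both evaluations check out. Your cautionary remarks are also apt: the bilinear extensions of both forms do vanish on this test polynomial, so the Hermitian extensions are genuinely needed, and absolute irreducibility (valid for $m\ge2$, i.e., under (\ref{assmn})) is exactly what legitimizes the Schur step. The trade-off is clear: the paper's route is shorter but leans on a cited result, while yours is self-contained and re-proves that norm comparison as a by-product — in the same spirit as the paper's Lemma~\ref{eqisct}, which compares invariant inner products on irreducible summands through rescaling factors.
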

\begin{proof}
For $u\in\cP_j$, let $u(x)=\sum_{|\al|=j}u_\al x^\al$, where
$|\al|=\al_0+\dots+\al_m$ and $x^{\al}=x_{0}^{\al_{0}}\cdots x_{m}^{\al_{m}}$,
be its decomposition into the sum of monomials. By definition,
\begin{eqnarray}\label{kosnorm}
\wtd{|u|}^2=\sum_{|\al|=j}u_\al^2\al!,
\end{eqnarray}
If $u\in\cH_j$, then its $L^2$-norm $|u|$ can be computed by a
similar formula (\cite[Theorem~5.14]{Ax01}):
\begin{eqnarray}\label{koltwo}
|u|^2=\frac1{(m+1)(m+3)\dots(m+2j-1)}\sum_{|\al|=j} u_\al^2\al!.
\end{eqnarray}
This proves the lemma in the case $k=0$. Thus we have a base for
the induction on $k$. Let $\vf$ be a smooth function on $\bbR$ and
$v\in\cP_j$. Using the equalities
\begin{eqnarray*}
\De\vf(\vk)=4\vf''(\vk)\vk+2(m+1)\vf'(\vk),\\
\scal{\nabla\vf(\vk)}{\nabla v}=2j\vf'(\vk)v,
\end{eqnarray*}
where $\De$ and $\nabla$ stand for the standard Euclidean
operators on $\bbR^{{m+1}}$, we get
\begin{eqnarray*}
\De(\vf(\vk)v)
=2(2\vf''(\vk)\vk+(m+2j+1)\vf'(\vk))v +\vf(\vk)\De v.
\end{eqnarray*}
If $u\in\cH_j$ and $\vf(\vk)=\vk^k$, then
\begin{eqnarray*}
\De(\vk^k u)=2k\left(m+2j+2k-1\right)\vk^{k-1}u.
\end{eqnarray*}
By (\ref{sympro}),
\begin{eqnarray*}
\wtd{|\vk^ku|}^2=\wtd{\scal{\vk^k u}{\vk^k
u}}=\wtd{\scal{\vk^{k-1}u}{\De(\vk^k u)}}=
2k\left(m+2(j+k)-1\right)\wtd{|\vk^{k-1}u|}^2.
\end{eqnarray*}
This verifies the step of the induction and concludes the proof.
\end{proof}
For the convenience of the reader, we collect the formulas for
coefficients in the following proposition. The definitions of the parameters
$c,s$ are given in (\ref{defcevp}), (\ref{defss}), respectively. The index $j$
corresponds to the summand  $|x|^{n-j}\cH_{j}$ in the decomposition
(\ref{harmdec}). The coefficients $\nu_{j}$ and  the rescaling factors $\tau_{j}$
are defined in Lemma~\ref{snusj} and by the formula (\ref{nnorms}), respectively.
Further, $|\ |$ is the norm of $L^{2}(S^{m})$. The tilde distinguishes objects
corresponding to the Kostlan--Shub--Smale model, in particular,   $\wtd{|\ |}$
is defined by (\ref{kosnorm}).

\begin{proposition}\label{kss-l2}
We have
\begin{eqnarray}
c^2=\dim\cP_n=\sum_{j\in J_n}c_j^2={{m+n}\choose{m}}
\label{csqua},\\
c_j^2=\dim\cH_{j}=\frac{(m+j-2)!(m+2j-1)}{(m-1)!j!},
\label{cksqua}\\
s_j^2=\frac{j(m+j-1)}{m}\label{snmtk},\\
s^2=\frac{1}{c^2} \sum_{j\in J_n}c_j^2s_j^2
=\frac{n(m+n+1)}{m+2}\label{sjsqu},
\end{eqnarray}
and $\nu_j=\frac{c_j^2}{c^2}$. Set $K_n=2^{-n}\Ga\left(\frac{m+1}{2}\right)$.
The coefficients
$\tau_j$ for $|\ |$ and $\wtd{|\ |}$ are subject to the
formula
\begin{eqnarray}\label{ftauj}
\tau_j=\frac{K_n}{\Ga\left(\frac{n-j+2}{2}\right)
\Ga\left(\frac{m+n+j+1}{2}\right)}.
\end{eqnarray}
Furthermore, $\td s_j=s_j$,      $\td\nu_j=n!\,\td c_j^2$, where
\begin{eqnarray}
\td c_j^2=\tau_j c_j^2=
\,\frac{K_n(m+j-2)!(m+2j-1)}
{(m-1)!j!\,\Ga\left(\frac{n-j+2}{2}\right)
\Ga\left(\frac{m+n+j+1}{2}\right)},\nonumber\\
\td c^2=\frac{1}{n!},\label{tdc}\\
\td s^2=n\label{tds}.
\end{eqnarray}
\end{proposition}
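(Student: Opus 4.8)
The plan is to read off almost every formula directly from the structural results of Section~\ref{seccoef}, leaving only two genuine computations. For the $L^2(S^m)$ parameters the key point is that the evaluation vector is the reproducing kernel: for an $L^2$-orthonormal basis $\{u_i\}$ of $\cE$, the identity (\ref{defev}) forces $\ev_{\cE}(p)=\sum_i u_i(p)u_i$, hence $|\ev_{\cE}(p)|^2=\sum_i u_i(p)^2$, which is constant in $p$ by homogeneity of $S^m$. Integrating over the sphere against the invariant probability measure gives $c^2=\sum_i|u_i|^2=\dim\cE$; applied to $\cE=\cP_n$ and to each summand $|x|^{n-j}\cH_j$ (which restricts to $\cH_j$ on $S^m$), this yields (\ref{csqua}) and (\ref{cksqua}), with the explicit dimension from (\ref{dimhn}), while $c^2=\sum_j c_j^2$ is just (\ref{cjcjsq}). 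Next, (\ref{snmtk}) is immediate from (\ref{sbylam}) with $\la_j=j(j+m-1)$, and $\nu_j=c_j^2/c^2$ is a definition. Finally Lemma~\ref{snusj} gives $s^2=\frac{1}{c^2}\sum_{j\in J_n}c_j^2 s_j^2=\frac{1}{m\,c^2}\sum_{j\in J_n}(\dim\cH_j)\la_j$, reducing (\ref{sjsqu}) to the evaluation of this last sum.

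The factors $\tau_j$ come straight from Lemma~\ref{taukj}. On the summand $|x|^{n-j}\cH_j=\vk^{k}\cH_j$ with $k=\frac{n-j}{2}$, relation (\ref{nnorms}) reads $\wtd{|w|}^2/|w|^2=\tau_j^{-1}$ for $w$ in this summand; since $\vk\equiv1$ on $S^m$, the $L^2$-norm of $\vk^k u$ equals that of $u\in\cH_j$, so Lemma~\ref{taukj} gives $\tau_j^{-1}=2^k k!\prod_{i=1}^{j+k}(m+2i-1)$. Rewriting this as (\ref{ftauj}) uses $2^k k!=2^{(n-j)/2}\Ga\!\left(\frac{n-j+2}{2}\right)$ together with the elementary identity $\prod_{i=1}^{N}(m+2i-1)=2^{N}\Ga\!\left(N+\frac{m+1}{2}\right)/\Ga\!\left(\frac{m+1}{2}\right)$ for $N=j+k=\frac{n+j}{2}$, the two powers of $2$ combining to $2^{n}$ and producing the constant $K_n$. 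The tilde data then follow from Lemma~\ref{eqisct}, whose conclusions $\td s_j=s_j$ and $\td c_j^2=\tau_j c_j^2$ give the explicit $\td c_j^2$ upon substituting (\ref{cksqua}) and (\ref{ftauj}).

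For the two remaining Kostlan--Shub--Smale constants I would use the reproducing kernel of the product (\ref{sympro}), which by (\ref{kosnorm}) and the multinomial theorem is $K_p(x)=\scal{x}{p}^n/n!$. Thus the evaluation vector $\ev_{\cP_n}(p)=K_p$ satisfies $\td c^2=\wtd{|K_p|}^2=K_p(p)=\scal{p}{p}^n/n!=1/n!$ on $S^m$, which is (\ref{tdc}), whence $\td\nu_j=\td c_j^2/\td c^2=n!\,\td c_j^2$. The same kernel gives the normalized correlation $\wtd{\scal{\iota(p)}{\iota(q)}}=K_q(p)/\td c^2=\scal{p}{q}^n$; since $\iota$ is a local homothety of factor $\td s$ into the unit sphere, comparing $1-\scal{p}{q}^n\sim\frac{n}{2}\dist(p,q)^2$ with $1-\cos(\td s\,\dist(p,q))\sim\frac{\td s^2}{2}\dist(p,q)^2$ near the diagonal gives $\td s^2=n$, which is (\ref{tds}).

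The main obstacle is the sum $S(n)=\sum_{j\in J_n}(\dim\cH_j)\la_j$ left over from the first step. I expect to treat it through the orthogonal splitting $\cP_n=\vk\cP_{n-2}\oplus\cH_n$, which on $S^m$ gives the step-two recursion $S(n)=S(n-2)+\la_n\dim\cH_n$, and then to confirm $S(n)=\frac{n(m+n+1)}{m+2}\,m\binom{m+n}{m}$ by induction on $n$ with base cases $n=0,1$; this delivers (\ref{sjsqu}). The Gamma-function rewriting of $\tau_j$ is the only other real calculation, and it is routine bookkeeping once the product identity above is in hand.
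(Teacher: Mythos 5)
Your proposal is correct, and on most points it coincides with the paper's own proof: you get $c^2=\dim\cP_n$ and $c_j^2=\dim\cH_j$ from the reproducing-kernel (trace of the projection) argument, $s_j^2=\frac{\la_j}{m}$ from Lemma~\ref{eqisct}, the factors $\tau_j$ from Lemma~\ref{taukj} with $k=\frac{n-j}{2}$ (the paper states this reduction and leaves the $\Ga$-function bookkeeping implicit; your identity $\prod_{i=1}^{N}(m+2i-1)=2^{N}\Ga\left(N+\tfrac{m+1}{2}\right)/\Ga\left(\tfrac{m+1}{2}\right)$ with $N=\frac{n+j}{2}$ is exactly what is needed), and $\td\phi=\frac{1}{n!}x_0^n$, hence (\ref{tdc}), from the kernel $\frac{1}{n!}\scal{x}{p}^n$. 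You genuinely diverge in two places. For (\ref{sjsqu}), the paper rewrites the sum as $\sum_{j\in J_n}\left\{{{m+j}\choose{m+1}}+{{m+j-1}\choose{m+1}}\right\}={{m+n+1}\choose{m+2}}$ and quotes the Pascal-triangle (hockey-stick) identity, while you verify the closed form for $S(n)=\sum_{j\in J_n}(\dim\cH_j)\la_j$ by induction through the trivial step-two recursion $S(n)=S(n-2)+\la_n\dim\cH_n$; this does close, since after cancellation the inductive step reduces to the identity $(m+n+1)(m+n)-(n-1)(n-2)=(m+2)(m+2n-1)$, which holds, and the base cases $n=0,1$ are immediate. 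Your route is a verification rather than a derivation, but it is complete and no harder than the paper's. For (\ref{tds}), the paper applies the rotation field $\xi=x_1\frac{\partial}{\partial x_0}-x_0\frac{\partial}{\partial x_1}$ to $\td\phi$ and uses (\ref{xiphi}) to get $\td s^2=\frac{1}{n!}\wtd{\left|nx_0^{n-1}x_1\right|}^2=n$; you instead expand the normalized correlation $\wtd{\scal{\iota(p)}{\iota(q)}}=\scal{p}{q}^n$ to second order in the geodesic distance $d=\dist(p,q)$, matching $1-\frac{n}{2}d^2+O(d^4)$ against $1-\frac{\td s^2}{2}d^2+O(d^4)$. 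That argument is sound (the chord-versus-arc discrepancy enters only at order $d^4$), and it has the merit of exhibiting the covariance function $\scal{p}{q}^n$ that characterizes the Kostlan--Shub--Smale ensemble, whereas the paper's computation is shorter and purely algebraic.
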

\begin{proof}
For any invariant finite dimensional subspace of $L^2(M)$ the
squared norm of the evaluation functional at a point of $M$ is
equal to the dimension of the space since
the integral operator with the kernel
$\phi(p,q)=\scal{\ev_{\cE}(p)}{\ev_{\cE}(q)}$ is the orthogonal
projection onto $\cE$ and its trace is equal to
$\int_M\phi(p,p)\,dp$. This proves the first equalities in
(\ref{csqua}) and (\ref{cksqua}) and, together with
(\ref{dimcpn}), (\ref{dimhn}), and (\ref{cjcjsq}) the remainder of
(\ref{csqua}) and (\ref{cksqua}).

Since $\cH_j$ is the $\la_j$-eigenspace of $-\De_{{S^{m}}}$ with
$\la_j=j(m+j-1)$, (\ref{snmtk}) is a consequence of
Lemma~\ref{eqisct}.

According to (\ref{cksqua}) and (\ref{snmtk}), (\ref{sjsqu}) is
equivalent to the equality
\begin{eqnarray*}
\sum_{j\in J_n}\frac{(m+j-2)!(m+2j-1)}{(m-1)!j!} \cdot
\frac{j(m+j-1)}{m}= {{m+n}\choose{m}}\cdot \frac{n(m+n+1)}{m+2},
\end{eqnarray*}
where $m\geq2$. Assuming ${n\choose k}=0$ for integer $k<0$ or
$k>n$ (this is true for the analytic extension of ${n\choose k}$
on $k$), we may rewrite this equality as
\begin{eqnarray*}
\sum_{j\in J_n}
\left\{{m+j\choose{m+1}}+{m+j-1\choose{m+1}}\right\}=
{m+n+1\choose{m+2}}.
\end{eqnarray*}
The formula above is equivalent to the following one, which is an
easy consequence of the Pascal Triangle equality:
\begin{eqnarray*}
\sum_{j=1}^n{{m+j}\choose{m+1}}={m+n+1\choose{m+2}}.
\end{eqnarray*}

By definition, $\nu_j=\frac{c_j^2}{c^2}$ (see Lemma~\ref{snusj}).
The equality (\ref{ftauj}) follows from  Lemma~\ref{taukj} with
$k=\frac{n-j}{2}$.  Since $\SO(m+1)$ is irreducible in $\cH_j$,
$\td c_j^2=\tau_j c_j^2$ and $\td s_j=s_j$ by Lemma~\ref{eqisct}.

By (\ref{decph}),
$\wtd{\scal{u}{x_0^n}}=n!u(o)$ for all monomials $u\in\cP_n$, where
$o=(1,0,\dots,0)$. Hence $\td\phi(x)=\frac1{n!}x_0^n$,
$\wtd{|\td\phi|}^2=\frac1{n!}$ and (\ref{tdc}) follows.

Due to (\ref{xiphi}), we may compute $s$ applying to $\td \phi_o$
any vector field $\xi\in\so(m+1)$ such that $\xi(o)\neq 0$. Let
$\xi=x_1\frac{\partial}{\partial x_0}-x_0\frac{\partial}{\partial
x_1}$. Then $|\xi(o)|=1$ and we get
\begin{eqnarray}\label{prtds}
\td s^2=\frac{\wtd{|\xi\td\phi_o|}^2}{\td c^2}
=\frac1{n!}\wtd{\big|nx_0^{n-1}x_1\big|}^2 =n.
\end{eqnarray}
This concludes the proof of the proposition.
\end{proof}
Equivalent forms of the equalities (\ref{rescal}) and (\ref{tds}) are known
from Kostlan's papers \cite{Ko93} and  \cite{Ko02}, respectively.
In \cite{Po99}, Podkorytov stated
the equality $s^2=\frac{n(n+m+1)}{m+2}$ without proof.


\section{The scaling limit of the coefficients $\nu_{j}$ as $n\to\infty$}
Now, our aim is to find the scaling limit of the coefficients
$\td\nu_j=\frac{\td c^2_j}{\td c^2}$. The coefficients  $c_j^2$
and $\tau_j$ (see (\ref{csqua}) and (\ref{ftauj}), respectively)
admit the evident extensions on $j$ onto $\bbC$, which are entire
functions which we denote as $c^2(\ze)$ and $\tau(\ze)$.
The function $c^2(\ze)$ is a polynomial of degree $m-1$.
Thus,
\begin{eqnarray*}
\td c^2(\ze)=\tau(\ze) c^2(\ze),\\
\td\nu(\ze)=n!\,\td c^2(\ze)\phantom{,}
\end{eqnarray*}
are entire functions. In this section, $c^2$ and $\td c^2$ denote the
extensions (thus they are not the sums of $c_j^2$ and $\td c_j^2$
as in the previous one). These functions depend on $m$ and $n$
which we omit in the notation.
Note that $\tau$, $c^2$, and $\td\nu$ are positive on the interval
$(0,n)$.
\begin{lemma}\label{tdcconc}
The function $\ln \td\nu$ is strictly concave on the interval
$(0,n)$ and has the unique maximum on it.
\end{lemma}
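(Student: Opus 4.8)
The plan is to work with the explicit entire extension $\td\nu(\ze)=n!\,\tau(\ze)\,c^2(\ze)$ supplied by Proposition~\ref{kss-l2} and the formula (\ref{ftauj}). Writing $\Ga$ in place of the factorials and collecting the $\ze$-independent factors $n!$ and $K_n$ into a constant, we have
\[ \ln\td\nu(\ze)=\const+\ln c^2(\ze)-\ln\Ga\left(\frac{n-\ze+2}{2}\right)-\ln\Ga\left(\frac{m+n+\ze+1}{2}\right). \]
The key structural point is that, by (\ref{cksqua}), the polynomial $c^2(\ze)$ splits into linear factors with positive leading coefficients,
\[ c^2(\ze)=\frac{1}{(m-1)!}\,(m+2\ze-1)\prod_{i=1}^{m-2}(\ze+i), \]
the product being empty (equal to $1$) when $m=2$, and every factor is positive on $(0,n)$ because $m>1$.

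First I would establish strict concavity by differentiating twice. Since $\ln c^2(\ze)=\const+\ln(m+2\ze-1)+\sum_{i=1}^{m-2}\ln(\ze+i)$ is a finite sum of functions of the form $\ln(a\ze+b)$ with $a>0$, each summand is strictly concave, so $(\ln c^2)''(\ze)<0$ on $(0,n)$. For the two $\Ga$-terms, writing $\psi=(\ln\Ga)'$ for the digamma function and $\psi'$ for the trigamma function, the chain rule gives
\[ \left(\ln\td\nu\right)''(\ze)=(\ln c^2)''(\ze)-\frac14\,\psi'\left(\frac{n-\ze+2}{2}\right)-\frac14\,\psi'\left(\frac{m+n+\ze+1}{2}\right). \]
Both arguments of $\psi'$ remain positive on $(0,n)$, and $\psi'(x)=\sum_{k\ge0}(x+k)^{-2}>0$ on $(0,\infty)$, so the last two terms are strictly negative. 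Hence $(\ln\td\nu)''<0$ throughout $(0,n)$, which is the asserted strict concavity.

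It remains to show that the maximum is attained at an interior point and is therefore unique. Strict concavity already yields that $\ln\td\nu$ has at most one critical point, which is then its unique global maximum; moreover $\td\nu$ extends to a positive continuous function on the closed interval $[0,n]$ (no $\Ga$-factor has a pole there), so a maximum over $[0,n]$ exists. Since $(\ln\td\nu)'$ is continuous and strictly decreasing, it suffices to exclude the endpoints by checking the one-sided signs. For the left endpoint,
\[ \left(\ln\td\nu\right)'(0)=\frac{2}{m-1}+\sum_{i=1}^{m-2}\frac1i-\frac12\left[\psi\left(\frac{m+n+1}{2}\right)-\psi\left(\frac{n+2}{2}\right)\right]>0, \]
which I would justify by bounding the bracket through the concavity estimate $\psi(b)-\psi(a)\le(b-a)\,\psi'(a)$ together with $\psi'\!\left(\frac{n+2}{2}\right)=O(1/n)$, so that the bracket is $O((m-1)/n)<1$, against the $\ze$-free quantity $\frac{2}{m-1}+\sum_{i=1}^{m-2}\frac1i\ge 2$. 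For the right endpoint, $(\ln\td\nu)'(n)<0$ follows from the dominant negative term $-\frac12\psi\!\left(\frac{m+2n+1}{2}\right)$, of order $-\frac12\ln n$, which outweighs the $O(m/n)$ contribution of $(c^2)'/c^2$ and the constant $\frac12\psi(1)=-\gamma/2$.

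I expect the concavity to be essentially forced once $c^2$ is factored, so the delicate step is the interiority of the maximum: the two endpoint-derivative inequalities are not purely algebraic and require quantitative bounds on $\psi$ and $\psi'$ holding uniformly under the standing hypothesis $1<m<n$ of (\ref{assmn}). A reassuring sanity check, which also guides the endpoint analysis, is that the peak of $\nu_j$ lies near $\ze=\sqrt{(m-1)n}$, which is strictly inside $(0,n)$ precisely because $m<n$.
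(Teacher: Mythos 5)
Your proof is correct, and its second half takes a genuinely different route from the paper. The concavity step is essentially identical: the paper also observes that $\ln c^2$ is concave as a sum of logarithms of linear factors and that $(\ln\Ga)''>0$ makes $\ln\tau$ concave (you merely make this explicit via the trigamma series). Where you diverge is the interiority/uniqueness of the maximum. The paper never touches endpoint derivatives or digamma asymptotics; instead it introduces the ratio $\rho_n(x)=\td\nu(x+2)/\td\nu(x)$, which by (\ref{cksqua}) and (\ref{ftauj}) is an explicit rational function, checks algebraically that $\rho_n(0)>1$ and $\rho_n(n-2)<1$ under (\ref{assmn}), and concludes by the intermediate value theorem plus Rolle that $\td\nu$ has a critical point in some $(x,x+2)\subset(0,n)$, necessarily the unique maximum by concavity. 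That argument buys two things: it is purely algebraic (no special-function estimates, and the small-$n$ cases reduce to evaluating a rational function, with only $n=3$ needing a remark), and the function $\rho_n$ is precisely the tool reused throughout the rest of the section — the localization (\ref{betwe}) of $x_c$, the scaling limit (\ref{limnumu}), and Proposition~\ref{nuabove} are all run through $\rho_n$ — so the lemma's proof doubles as setup. Your route via the sign of $(\ln\td\nu)'$ at the endpoints is self-contained standard calculus, but, as you note, it is quantitatively delicate: the left-endpoint bound is clean (the bracket is at most $\tfrac{m-1}{2}\psi'\bigl(\tfrac{n+2}{2}\bigr)<1$ since $m<n$, against a constant $\ge 2$), whereas at the right endpoint the heuristic ``$-\tfrac12\ln n$ dominates'' is marginal for $n=3,4$; there you need the sharpened bound $(\ln c^2)'(n)<\tfrac{m-1}{n}\le 1-\tfrac2n$ together with an explicit lower bound such as $\psi\bigl(\tfrac{m+2n+1}{2}\bigr)\ge\psi(9/2)>1.38$ to get strict negativity for all $n>m>1$, since the lemma carries no ``sufficiently large $n$'' hypothesis. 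With that detail filled in, your argument is a valid, more analytic alternative.
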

\begin{proof}
We have $\ln c^2(x)''<0$ because $c^2$ is a product of linear
functions, for $\ln\tau$ the same is true since $(\ln\Ga(x))''>0$.
Hence $\ln\td\nu$ is strictly concave.

Suppose that
\begin{eqnarray}\label{tdceq}
\td\nu(x+2)=\td\nu(x)~~\mbox{for some}~x\in(0,n-2).
\end{eqnarray}
Then $\td\nu$ has a critical point
\begin{eqnarray*}
x_c\in(x,x+2)
\end{eqnarray*}
which necessarily is unique and corresponds to the global maximum
on $(0,n)$. 
Thus, it is sufficient to prove (\ref{tdceq}). Set
\begin{eqnarray*}
\rho_n(x)=\frac{\td\nu(x+2)}{\td\nu(x)},
\end{eqnarray*}
where $x$ runs over $(0,n-2)$.  The condition (\ref{tdceq}) is
equivalent to
\begin{eqnarray}\label{rhoton}
\rho_n(x)=1~~\mbox{for some}~x\in(0,n-2).
\end{eqnarray}
According to (\ref{cksqua}) and (\ref{ftauj}), $\frac{\tau(x+2)}{\tau(x)}
=\frac{n-x}{n+x+m+1}$ and
\begin{eqnarray}
\label{rhofo}
\phantom{xxxx} \rho_n(x)=\left(1+\frac{m-2}{x+1}\right)
\left(1+\frac{m-2}{x+2}\right)
\left(1+\frac{2}{x+\frac{m-1}{2}}\right)\frac{n-x}{n+x+m+1}.
\end{eqnarray}
Due to (\ref{assmn}), $\rho_n(0)=\frac{2mn}{m+n+1}>1$. Assuming
$n>3$ and applying (\ref{assmn}) again, we get
\begin{eqnarray*}
\rho_n(n-2)=\frac{2(n+m-3)(n+m-2)}{n(n-1)(2n+m-5)}
<\frac{4(n+m-3)}{n(2n+m-5)}\leq \frac{n+m-3}{2n+m-5}<1.
\end{eqnarray*}
If $n=3$, then $m=2$ and $\rho_n(n-2)=\frac23$.
Since $n\geq3$ by (\ref{assmn}), this proves (\ref{rhoton}) and
consequently the lemma.
\end{proof}
Set
\begin{eqnarray}\label{mundew}
\mu_n=\sqrt{(m-1)n},\\
\label{defka} \bar\nu_n=\td\nu(x_c),
\end{eqnarray}
where $x_c$ is the critical point of $\td\nu$ (hence $\bar\nu_n$
is the maximum of $\td\nu(x)$ on $[0,n]$).

In the following theorem, we assume that $\td\nu$ is a function on
$(0,\infty)$ which vanishes outside $(0,n)$.
\begin{theorem}\label{limitnu}
If $n$ is sufficiently large, then
\begin{eqnarray}\label{betwe}
\mu_n-\frac{m+1}{2}<x_c<\mu_n+2.
\end{eqnarray}
For any $t>0$
\begin{eqnarray}\label{limnumu}
\lim_{n\to\infty}\frac{\td\nu(\mu_nt)}{\bar\nu_n} =
\left(t^2e^{1-t^2}\right)^{\frac{m-1}{2}},
\end{eqnarray}
where the sequence on the left converges uniformly on
$(0,\infty)$. Moreover,
\begin{eqnarray}\label{asynun}
\bar\nu_n=\frac{A_m}{\sqrt{n}}(1+o(1)),
\end{eqnarray}
as $n\to\infty$, where
$A_m=\frac{2\sqrt{2}}{\Ga\left(\frac{{m}}{2}\right)}
\left(\frac{m-1}{2e}\right)^{\frac{m-1}{2}}$.
\end{theorem}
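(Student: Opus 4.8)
The plan is to treat all three assertions through the logarithm of the entire extension $\td\nu(\ze)=n!\,\tau(\ze)c^2(\ze)$, whose factors are written out explicitly in (\ref{cksqua}) and (\ref{ftauj}) as products and ratios of $\Ga$-functions. The governing scale is $x\sim\mu_n=\sqrt{(m-1)n}$, for which $x\to\infty$ while $x/n\to0$; this is precisely the regime where Stirling's formula and the digamma expansion $\psi(z)=\ln z+O(1/z)$ apply at once to the ``small'' arguments $m+x-1,\ x+1$ and to the ``large'' arguments $\frac{n-x+2}2,\ \frac{m+n+x+1}2$. I would record once and for all the logarithmic derivative
\[
(\ln\td\nu)'(x)=\frac{2}{m+2x-1}+\psi(m+x-1)-\psi(x+1)
+\frac12\,\psi\!\left(\frac{n-x+2}2\right)-\frac12\,\psi\!\left(\frac{m+n+x+1}2\right),
\]
and use it throughout.

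For the localization (\ref{betwe}) I would exploit that, by Lemma~\ref{tdcconc}, $(\ln\td\nu)'$ is strictly decreasing with a single zero $x_c$. Since $\ln\rho_n(x)=\int_x^{x+2}(\ln\td\nu)'$, monotonicity gives the implications $\rho_n(a)>1\Rightarrow(\ln\td\nu)'(a)>0\Rightarrow x_c>a$ and $\rho_n(b)<1\Rightarrow(\ln\td\nu)'(b+2)<0\Rightarrow x_c<b+2$. I then feed the explicit product (\ref{rhofo}) into these. A Taylor expansion in $1/x$ and $x/n$ of the three rational factors and of $\frac{n-x}{n+x+m+1}$ shows the leading terms cancel at $x=\mu_n$ (they vanish exactly because $\mu_n^2=(m-1)n$), leaving $\ln\rho_n(\mu_n)=-\frac{2(m+1)}{n}+o(1/n)<0$, which yields $x_c<\mu_n+2$. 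The lower bound $x_c>\mu_n-\frac{m+1}2$ needs $\ln\rho_n(\mu_n-\frac{m+1}2)>0$; here the shift $-\frac{m+1}2$ contributes $+\frac{2(m+1)}n$ that cancels the constant $-\frac{2(m+1)}n$ correction to order $1/n$, so the sign is settled only at the next order. Making this cancellation rigorous---by retaining the $O(1/x^2)$ and $O(x/n^2)$ terms exactly, or by clearing denominators in $\rho_n>1$ and checking the resulting polynomial inequality in $\mu_n$ for large $\mu_n$---is the \emph{main obstacle} of the proof.

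For the scaling limit (\ref{limnumu}) I would set $g_n(t)=\ln\td\nu(\mu_n t)$ and compute $g_n'(t)=\mu_n(\ln\td\nu)'(\mu_n t)$. The digamma expansion gives $\frac{2}{m+2x-1}+\psi(m+x-1)-\psi(x+1)=\frac{m-1}{x}+O(x^{-2})$ and $\frac12\psi(\frac{n-x+2}2)-\frac12\psi(\frac{m+n+x+1}2)=-\frac xn+O(x^2/n^2)$; since $\frac{m-1}{\mu_n}=\frac{\mu_n}n=\sqrt{(m-1)/n}$ both pieces have the same order, whence $g_n'(t)\to(m-1)(\frac1t-t)$ uniformly on compact subsets of $(0,\infty)$. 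Integrating from $t_c=x_c/\mu_n\to1$ (where $g_n$ attains its maximum $\ln\bar\nu_n$) gives $\ln\frac{\td\nu(\mu_n t)}{\bar\nu_n}\to(m-1)(\ln t+\frac{1-t^2}2)=\ln(t^2e^{1-t^2})^{(m-1)/2}$. For global uniformity I would use that each $h_n(t)=\ln(\td\nu(\mu_n t)/\bar\nu_n)$ is concave with maximum $0$ and that the limit is concave with maximum $0$: pointwise convergence of concave functions is locally uniform on the interior, and the common super-exponential decay of $e^{h_n}$ and $e^{h}$ past the peak (again from concavity) upgrades this to uniform convergence on all of $(0,\infty)$.

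Finally, for the constant (\ref{asynun}) it suffices, since $\bar\nu_n/\td\nu(\mu_n)\to1$ by the previous step (the limit equals $1$ at $t=1$), to evaluate $\td\nu(\mu_n)\sqrt n$. I would apply Stirling factor by factor: $(m+2\mu_n-1)\,\Ga(m+\mu_n-1)/\Ga(\mu_n+1)\sim2\mu_n^{m-1}=2((m-1)n)^{(m-1)/2}$; the two central $\Ga$-factors, whose arguments are symmetric about $\frac n2+\frac{m+3}4$ with displacement $\sim\mu_n/2\sim\sqrt n$, satisfy the Laplace-type expansion $\Ga(\frac{n-\mu_n+2}2)\Ga(\frac{m+n+\mu_n+1}2)\sim\Ga(\frac n2+\frac{m+3}4)^2e^{(m-1)/2}$; and $n!\,2^{-n}/\Ga(\frac n2+\frac{m+3}4)^2\sim n^{-m/2}2^{(m+1)/2}/\sqrt{2\pi}$. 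Multiplying these together with $K_n=2^{-n}\Ga(\frac{m+1}2)$ and $1/(m-1)!$ and collecting powers of $n$ leaves $n^{-1/2}$, and the constant simplifies to $A_m$ after invoking the Legendre duplication formula $\Ga(\frac m2)\Ga(\frac{m+1}2)=2^{1-m}\sqrt\pi\,(m-1)!$. The careful point here is justifying the Gaussian expansion of the two central factors: the displacement $\sim\sqrt n$ is comparable to the width, so the quadratic term produces the finite factor $e^{(m-1)/2}$ and must be controlled against the vanishing higher-order terms.
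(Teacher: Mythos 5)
Your architecture is sound, and for (\ref{betwe}) and (\ref{limnumu}) it essentially coincides with the paper's proof: the paper works with the discrete ratio $\rho_n(x)=\td\nu(x+2)/\td\nu(x)$ given explicitly in (\ref{rhofo}), where you work with $(\ln\td\nu)'$ and digamma expansions, but the limiting object $(m-1)\left(\frac1t-t\right)$, the integration from the peak, and the concavity argument for uniformity on $(0,\infty)$ are the same. The genuine problem is the point you yourself flag and leave undone: you never establish the sign of $\ln\rho_n\left(\mu_n-\frac{m+1}{2}\right)$, and without it the lower bound in (\ref{betwe}) is unproved --- and this gap propagates, since your integration from $t_c=x_c/\mu_n\to1$ and your normalization $\bar\nu_n/\td\nu(\mu_n)\to1$ both rest on (\ref{betwe}). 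The paper closes exactly this step: it motivates the shift $\frac{m+1}{2}$ by a Newton step $\eta_n=\mu_n-\frac{\rho_n(\mu_n)-1}{\rho_n'(\mu_n)}$ (convexity and monotonicity of $\rho_n$ give $\rho_n(\eta_n)>1$, hence $x_c>\eta_n$, and $\rho_n'(\mu_n)=-\frac4n+O\left(n^{-\frac32}\right)$ gives $\mu_n-\eta_n=\frac{m+1}{2}+O\left(n^{-\frac32}\right)$), and then settles the sign at exactly $\mu_n-\frac{m+1}{2}$ by the third-order computation $\lim_{n\to\infty}n^{\frac32}\left(\rho_n\left(\mu_n-\frac{m+1}{2}\right)-1\right)=\frac{m(m+1)}{\sqrt{m-1}}>0$. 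So the cancellation you worried about does resolve favorably and your plan of retaining the next-order terms would succeed, but as written the decisive computation is missing.

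Where you genuinely depart from the paper is the constant in (\ref{asynun}). You evaluate $\td\nu(\mu_n)\sqrt n$ directly by Stirling, factor by factor, plus the Legendre duplication formula; your intermediate asymptotics check out --- $(m+2\mu_n-1)\Ga(m+\mu_n-1)/\Ga(\mu_n+1)\sim2\mu_n^{m-1}$, the two central factors contribute $\Ga\left(\frac n2+\frac{m+3}{4}\right)^2e^{\frac{m-1}{2}}$ because the displacement $d\sim\mu_n/2$ satisfies $d^2/\left(\frac n2\right)\to\frac{m-1}{2}$, and the bookkeeping does collapse to $A_mn^{-\frac12}$ --- though, as you note, making the Gaussian expansion of the central factors rigorous requires a uniform-error form of Stirling, since the displacement is comparable to the width. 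The paper avoids Gamma asymptotics entirely: it uses the normalization (\ref{sumnul}), i.e.\ $\sum_{j\in J_n}\td\nu_j=1$, reads the sum as a Riemann sum of the scaling limit already proved in (\ref{limnumu}) (with a dominated-convergence majorant of type $Ke^{-\eta t}$), and divides the resulting integral $\int_0^\infty\left(t^2e^{1-t^2}\right)^{\frac{m-1}{2}}dt=\frac{\Ga\left(\frac m2\right)}{2\sqrt e}\left(\frac{2e}{m-1}\right)^{\frac m2}$ into $1$ to get $\lim_{n\to\infty}\bar\nu_n\mu_n=A_m\sqrt{m-1}$. The trade-off: the paper's route buys the constant for free from the summation identity but needs the integrable majorant; yours is self-contained at the price of delicate Stirling analysis. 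Both are valid, provided the gap in (\ref{betwe}) is first repaired.
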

\begin{proof}
Setting $x=a\sqrt{n}$ in factors of $\rho_n$,  by the straightforward computation
we get
\begin{eqnarray*}
\rho_n(a\sqrt{n})=1+\left(\frac{2(m-1)}{a}-2a\right)\frac1{\sqrt{n}}
+O\left(\frac1n\right).
\end{eqnarray*}
Therefore,
\begin{eqnarray*}
\rho_n(a\sqrt{n})=1+O\left(\frac1n\right)~~~\Longleftrightarrow~~~
a=\sqrt{m-1}.
\end{eqnarray*}
Thus $\mu_n$ can be considered as an approximation of $x_c$.
According to the calculation above,
\begin{eqnarray}\label{muas}
\rho_n(\mu_n)=1-\frac{2(m+1)}{n}+ O\left(n^{-\frac32}\right).
\end{eqnarray}
It follows that
\begin{eqnarray}\label{rhomuo}
\rho_n(\mu_n)<1
\end{eqnarray}
for sufficiently large $n$.
The function $\rho_n$ is positive, decreasing, and convex since
each factor in (\ref{rhofo}) possesses these properties (the
factors are convex because all of them may be written as
$\pm1+\frac{a}{x+b}$, where $a>0$).
Thus (\ref{rhomuo}) gives an upper bound for $x_c$:
\begin{eqnarray*}\label{xclemun}
x_c<\mu_n+2.
\end{eqnarray*}
To find a lower bound, we do one step of the Newton method:
\begin{eqnarray*}
\eta_n=\mu_n-\frac{\rho_n(\mu_n)-1}{\rho_n'(\mu_n)}.
\end{eqnarray*}
Since $\rho_n$ is convex and decreasing, $\rho_n(\eta_n)>1$. Hence
$x_c>\eta_n$. We shall replace $\eta_n$ with a simpler term which
is asymptotic to it as $n\to\infty$. Due to (\ref{muas}), it is
sufficient to do this for $\rho_n'(\mu_n)$. Let us  consider
$(\ln\rho_n)'$. For the first three factors in (\ref{rhofo}) we
may use the equality
$\ln\left(1+\frac{a}{x+b}\right)'=-\frac{a}{(x+a)(x+a+b)}
=-\frac{a}{x^2}+O\left(x^{-3}\right)$ as $x\to\infty$. The
logarithmic derivative of the forth one is equal to
$\frac{2n+m+1}{(n-x)(n+x+m+1)}$. Setting $x=\mu_n$, we get
\begin{eqnarray*}
\frac{\rho_n'(\mu_n)}{\rho_n(\mu_n)}=-\left(2\cdot\frac{m-2}{\mu_n^2}
+\frac{2}{\mu_n^2}+\frac{2}{n}\right)+O\left(n^{-\frac32}\right)
=-\frac{4}{n}+O\left(n^{-\frac32}\right).
\end{eqnarray*}
Together with (\ref{mundew}) and (\ref{muas}), this implies
\begin{eqnarray}\label{rhopmn}
\rho_n'(\mu_n)=-\frac4n+O\left(n^{-\frac32}\right).
\end{eqnarray}
Therefore,
\begin{eqnarray*}
\mu_n-\eta_n=\frac{\rho_n(\mu_n)-1}{\rho_n'(\mu_n)}=\frac{m+1}{2}
+O\left(n^{-\frac32}\right).
\end{eqnarray*}
Furthermore, 
\begin{eqnarray*}
\lim_{n\to\infty}
n^{\frac32}\left(\rho_n\left(\mu_n-\frac{m+1}{2}\right)-1\right)
=\frac{m(m+1)}{\sqrt{m-1}}.
\end{eqnarray*}
Hence $\rho_n\left(\mu_n-\frac{m+1}{2}\right)>1$ for sufficiently
large $n$. This proves (\ref{betwe}).

The following equalities can be proved by a computation:
\begin{eqnarray}
\lim_{n\to\infty}\rho_n(\mu_nx)=1,\label{limrmux}\\
\lim_{n\to\infty} \mu_n\ln\rho_n(\mu_n
x)=2(m-1)\left(\frac1x-x\right),\nonumber
\end{eqnarray}
and the convergence is locally uniform on $(0,\infty)$. The
sequence of piecewise constant functions defined by the equality
\begin{eqnarray}\label{deffn}
f_n(\xi)=\mu_n\ln\rho_n(i)
\end{eqnarray}
for $\xi\in\left(\frac{i-2}{\mu_n},\frac{i}{\mu_n}\right]$, where
$i\in J_n$, converges to $2(m-1)\left(\frac1\xi-\xi\right)$
locally uniformly on $(1,t)$.  Thus,
\begin{eqnarray*}
\int_1^tf_n(\xi)\,d\xi=\frac2{\mu_n}\sum_{i\in J_n,\atop i<\mu_n
t}
\mu_n\ln\rho_n(i)+O\left(n^{-\frac12}\right)= 2\ln \prod_{i\in
J_n,\atop i<\mu_n t}\rho_n(i)+O\left(n^{-\frac12}\right),\\
\lim_{n\to\infty}\ln\frac{\td\nu(\mu_nt)}{\td\nu(\mu_n)}
=\lim_{n\to\infty}\ln \prod_{\mu_n<i<\mu_nt,\atop
n-i~\mbox{\rm\tiny even}}\rho_n(i)
=\frac12\lim_{n\to\infty}\int_1^tf_n(\xi)\,d\xi
\\
=(m-1)\int_1^t\left(\frac1\xi-\xi\right)\,d\xi=\frac{m-1}2(1+2\ln
t-t^2).
\end{eqnarray*}
Here and in what follows, we use Lebesgue's Dominated Convergence
Theorem with the majorant of the type $Ke^{-\eta t}$. The
convergence is locally uniform. Due to (\ref{betwe}),
\begin{eqnarray}\label{barmunu}
\lim_{n\to\infty}\frac{\td\nu(\mu_n)}{\bar\nu_n}
=\lim_{n\to\infty}\frac{\td\nu(\mu_n)}{\td\nu(x_c)}=1.
\end{eqnarray}
Therefore, $\lim_{n\to\infty}\frac{\td\nu(\mu_nt)}{\bar\nu_n}
=\left(t^2e^{1-t^2}\right)^{\frac{m-1}{2}}$ for any $t>1$. The
arguments above with minor changes can be extended onto the case
$0<t\leq1$. Thus (\ref{limnumu}) holds for all $t>0$.  The
sequence $\frac{\td\nu(\mu_nt)}{\bar\nu_n}$ converges uniformly on
any compact interval in $(0,\infty)$. Since $\ln\td\nu(\mu_nt)$ is
concave and $\td\nu(\mu_nt)$ is positive and has maximum near $1$,
this implies the uniform convergence on $(0,\infty)$.

Furthermore,
\begin{eqnarray*}
\lim_{n\to\infty}\frac{2}{\mu_n}\sum_{j\in J_n}
\left(\frac{j^2}{\mu_n^2}
e^{1-\frac{j^2}{\mu_n^2}}\right)^{\frac{m-1}{2}} =\int_0^\infty
\left(t^2e^{1-t^2}\right)^{\frac{m-1}{2}}\,dt
=\frac{\Ga\left(\frac{m}{2}\right)}{2\sqrt{e}}
\left(\frac{2e}{m-1}\right)^{\frac{m}{2}}.
\end{eqnarray*}
On the other hand, the equality $\sum_{j\in J_{n}}\td\nu_j=1$ which is true by
(\ref{sumnul}),
(\ref{limnumu}), and Lebesgue's Dominated Convergence Theorem imply
\begin{eqnarray*}
\lim_{n\to\infty}\bar\nu_n\sum_{i\in J_n}\left(\frac{j^2}{\mu_n^2}
e^{1-\frac{j^2}{\mu_n^2}}\right)^{\frac{m-1}{2}}=1.
\end{eqnarray*}
Computing the ratio of the left-hand parts of the equalities
above, we get
\begin{eqnarray} \label{limnumun}
\lim_{n\to\infty}\bar\nu_n\mu_n=
\frac{4\sqrt{e}}{\Ga\left(\frac{m}{2}\right)}
\left(\frac{m-1}{2e}\right)^{\frac{m}{2}}=A_m\sqrt{m-1}.
\end{eqnarray}
This proves (\ref{asynun}) and the theorem.
\end{proof}
Actually, $\mu_n-\frac{m-1}{2}$ is a better approximation for
$x_c$ than $\mu_{n}$ or $\eta_{n}$ since it is the center of the
interval $(\eta_n,\eta_n+2)$ and $\eta_n$ is close to the solution
of the equation $\rho_n(x)=1$.

The constant $A_m$ in (\ref{asynun}) decreases when $m$ grows and
$\lim_{m\to\infty}A_m=\frac{2}{\sqrt{\pi}}$.
Since $m\geq2$, this implies the inequalities
\begin{eqnarray}\label{ineqam}
\frac{2}{\sqrt{\pi}}<A_m\leq\frac{2}{\sqrt{e}}
\end{eqnarray}
We omit the proof which is standard.

Modifying the arguments above slightly, it is possible to find an
upper bound for the ratio $\frac{\td\nu(j)}{\td\nu(\mu_n)}$.
\begin{proposition}\label{nuabove}
Set $j_n=\min\{j\in J_n:\,j>\mu_n\}$ and let $j\in J_n$, $j>j_n$.
Then
\begin{eqnarray}\label{estnuab}
\frac{\td\nu(j+2)}{\td\nu(j_n+2)}
<\frac{j^{m-1}}{j_n^{m-1}}e^{\frac{j_n^2-j^2}{2n}}.
\end{eqnarray}
\end{proposition}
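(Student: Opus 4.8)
The plan rests on the factorization $\td\nu(\ze)=n!\,\tau(\ze)\,c^2(\ze)$ coming from Section~\ref{seccoef}, which immediately gives
\begin{eqnarray*}
\frac{\td\nu(j+2)}{\td\nu(j_n+2)}=\frac{c^2(j+2)}{c^2(j_n+2)}\cdot\frac{\tau(j+2)}{\tau(j_n+2)}.
\end{eqnarray*}
I would then bound the two factors separately, showing that the polynomial factor is dominated by $\frac{j^{m-1}}{j_n^{m-1}}$ and the $\tau$-factor by $e^{(j_n^2-j^2)/2n}$. Multiplying these two estimates yields (\ref{estnuab}).

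For the polynomial factor I would use that, by (\ref{cksqua}), $c^2(x)=\frac{m+2x-1}{(m-1)!}\prod_{l=1}^{m-2}(x+l)$ is a product of $m-1$ increasing linear factors. Consequently $\frac{c^2(j+2)}{c^2(j_n+2)}$ is the product of the $m-1$ ratios $\frac{m+2j+3}{m+2j_n+3}$ and $\frac{j+2+l}{j_n+2+l}$ for $l=1,\dots,m-2$. Each of these is strictly less than $\frac{j}{j_n}$, since after cross-multiplication every such inequality collapses to $j_n<j$. Multiplying the $m-1$ estimates then gives $\frac{c^2(j+2)}{c^2(j_n+2)}<\frac{j^{m-1}}{j_n^{m-1}}$.

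For the $\tau$-factor I would telescope the one-step identity $\frac{\tau(i+2)}{\tau(i)}=\frac{n-i}{n+i+m+1}$ (recorded just before (\ref{rhofo})), obtaining
\begin{eqnarray*}
\frac{\tau(j+2)}{\tau(j_n+2)}=\prod_{i\in J_n,\atop j_n<i\le j}\frac{n-i}{n+i+m+1}.
\end{eqnarray*}
Dropping $m+1$ from the denominator, $\frac{n-i}{n+i+m+1}<\frac{n-i}{n+i}$, and applying the elementary estimate $\frac{1-u}{1+u}\le e^{-2u}$ with $u=\frac{i}{n}$ to each factor gives $\frac{\tau(j+2)}{\tau(j_n+2)}\le\exp\big(-\frac2n\sum_i i\big)$. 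The index $i$ runs over the arithmetic progression $J_n\cap(j_n,j]$, so $\sum_i i=\frac{j^2-j_n^2}{4}+\frac{j-j_n}{2}$, whence $-\frac2n\sum_i i=\frac{j_n^2-j^2}{2n}-\frac{j-j_n}{n}<\frac{j_n^2-j^2}{2n}$, which is the desired bound.

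There is no genuine obstacle here; the argument is entirely elementary, and the main thing requiring care is bookkeeping. One must check that all factors are positive, which holds because $0<j_n<i\le j$ and $i<n$, and must track strictness of the inequality, which is supplied both by the polynomial factor and by the leftover term $-\frac{j-j_n}{n}$ in the exponent. The boundary case $j=n$ is trivial, since then the factor $\frac{n-i}{n+i+m+1}$ at $i=n$ vanishes and the left-hand side of (\ref{estnuab}) is $0$.
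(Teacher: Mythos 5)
Your proof is correct, and it reaches (\ref{estnuab}) by a genuinely more discrete route than the paper's. The paper works with the one-step ratio $\rho_n(x)=\td\nu(x+2)/\td\nu(x)$ as a whole: since $\rho_n$ decreases on $(0,n)$, the telescoped quantity $2\ln\frac{\td\nu(j+2)}{\td\nu(j_n+2)}=2\sum_i\ln\rho_n(i+2)$ is majorized by the integral $\int_{j_n}^{j}\ln\rho_n(x)\,dx$ (this is (\ref{lnrhole})), and this integral is then bounded by applying $\ln(1+x)<x$ to the factored form (\ref{rhofo}): the first three factors integrate to $2(m-1)\ln(j/j_n)$, while the fourth gives $-(j^2-j_n^2)/n$ via $\ln\frac{1+t}{1-t}>2t$, which is (\ref{estrtau}). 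You instead split $\td\nu=n!\,\tau c^2$ at the outset and estimate the two ratios separately: the polynomial ratio $c^2(j+2)/c^2(j_n+2)$ by comparing the $m-1$ increasing linear factors from (\ref{cksqua}) term by term (each ratio is $<j/j_n$ after cross-multiplication), and the $\tau$-ratio by telescoping the recursion $\tau(i+2)/\tau(i)=\frac{n-i}{n+i+m+1}$ over $J_n\cap(j_n,j]$ and applying $\frac{1-u}{1+u}\le e^{-2u}$ at each discrete step. The underlying exponential inequality is the same one the paper uses ($\ln\frac{1+t}{1-t}\ge 2t$), but you never need the sum-to-integral comparison, hence never need monotonicity of $\rho_n$; moreover your bookkeeping of the arithmetic progression yields the slightly sharper bound carrying the extra factor $e^{-(j-j_n)/n}$, which also supplies strictness cleanly. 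What the paper's continuous formulation buys is reuse: the integral estimate (\ref{estrtau}) is invoked again in the proof of Theorem~\ref{lower} (see (\ref{lntauaa})), where the endpoints $a\sqrt{n}$ and $l_n$ need not lie in $J_n$, so a bound in a continuous variable is exactly what is needed there. Your handling of positivity and of the boundary case $j=n$ (where the left-hand side vanishes) is in order.
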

\begin{proof}
Since $\rho_n(x)$ decreases on $(0,n)$,
\begin{eqnarray}\label{lnrhole}
\int_{j_n}^{j}\ln\rho_n(x)\,dx>2\sum_{i\in J_n,\atop i<
j}\ln\rho_n(i+2)= 2\ln\frac{\td\nu(j+2)}{\td\nu(j_n+2)}.
\end{eqnarray}
The inequality $\ln(1+x)<x$ and (\ref{rhofo}) imply
\begin{eqnarray*}
\ln\rho_n(x)<\frac{m-2}{x+1}+\frac{m-2}{x+2}+\frac{4}{2x+m-1}
-\ln\frac{n+x+m+1}{n-x}.
\end{eqnarray*}
Due to the evident inequality
$\frac{m-2}{x+1}+\frac{m-2}{x+2}+\frac{2}{x+\frac{m-1}{2}}
<\frac{2(m-1)}{x}$, for the integral of the sum of the first three
terms we have the upper bound
\begin{eqnarray*}
2(m-1)\int_{j_n}^{j}\frac{dx}{x} =2(m-1)\ln\frac{j}{j_n}.
\end{eqnarray*}
If $0<t<1$, then $\ln\frac{1+t}{1-t}>2t$. Setting $t=\frac{x}{n}$,
we get the inequality $\ln\frac{n+x+m+1}{n-x}>\frac{2x}{n}$.
Therefore,
\begin{eqnarray}\label{estrtau}
\int_{j_n}^j\ln\frac{n+x+m+1}{n-x}\,dx>\frac{j^2-j_n^2}{n}.
\end{eqnarray}
Thus,
$\int_{j_n}^j\ln\rho_n(x)\,dx<2(m-1)\ln\frac{j}{j_n}+\frac{j_n^2-j^2}{n}$.
Together with (\ref{lnrhole}), this implies (\ref{estnuab}).
\end{proof}

\section{Approximation by polynomials of lower degree}
Let $x$ be the Kostlan--Shub--Smale random polynomial in $\cP_n$.
In this section, we show that $x$ admits a good approximation in
Sobolev spaces on $S^m$ by polynomials of degree $l_n\sim
C\sqrt{n\ln n}$, where $C$ depends on $m$ and on the order of
the Sobolev space.
We estimate the expectation of
$\frac{\dist_X(x,\cP_{l_n})}{\|x\|_X}$ and the probability of the
inequality $\dist_X(x,\cP_{l_n})<\ep\|x\|_X$, where $X$ is the
Sobolev space $H^q=H^q(S^m)$,  $q\geq0$,
with the norm
\begin{eqnarray*}
|x|_q=\Big(|x_0|^2+\sum_{j=1}^\infty j^{2q}|x_j|^2\Big)^{\frac12}.
\end{eqnarray*}
If $j>m-1$, then $j^2<\la_j<2j^2$, where $\la_j=j(j+m-1)$ is the
$j$th eigenvalue of the Laplace--Beltrami operator $\De_{S^{m}}$
on $S^m$. Hence $|x|_q$ is equivalent to the norm
\begin{eqnarray*}
\Big(|x_0|^2+|(-\De_{S^{m}})^{\frac{q}{2}}u|^2\Big)^{\frac12}
=\Big(|x_0|^2+\sum_{j=0}^\infty\la_j^q|x_j|^2\Big)^{\frac12}
\end{eqnarray*}
Clearly, $H^0=L^2(S^m)$.
\begin{lemma}\label{jqtau}
For sufficiently large $n$ the function $\al(x)=x^{2q}\tau(x)$
strictly decreases on the interval $(\sqrt{2qn}+2,n)$.
\end{lemma}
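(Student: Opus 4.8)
The plan is to prove strict decrease by showing that the logarithmic derivative $(\ln\al)'$ is negative throughout the interval. First I would record that on $(\sqrt{2qn}+2,n)$ the two Gamma arguments $\tfrac{n-x+2}{2}$ and $\tfrac{m+n+x+1}{2}$ appearing in (\ref{ftauj}) are strictly positive (using $x<n$ and (\ref{assmn})), so that $\tau$, and hence $\al(x)=x^{2q}\tau(x)$, is smooth and strictly positive there; thus strict decrease is equivalent to $(\ln\al)'(x)<0$. Writing $\ln\al(x)=2q\ln x+\ln K_n-\ln\Ga\!\left(\tfrac{n-x+2}{2}\right)-\ln\Ga\!\left(\tfrac{m+n+x+1}{2}\right)$ and differentiating gives
$$(\ln\al)'(x)=\frac{2q}{x}+\frac12\Big[\psi\!\left(\tfrac{n-x+2}{2}\right)-\psi\!\left(\tfrac{m+n+x+1}{2}\right)\Big],$$
where $\psi=(\ln\Ga)'$ is the digamma function.

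Next I would estimate the digamma difference. Since $\psi'(t)=\sum_{k\ge0}(t+k)^{-2}>\int_0^\infty(t+s)^{-2}\,ds=\tfrac1t$, integration gives $\psi(b)-\psi(a)>\ln(b/a)$ for $0<a<b$; applying this with $a=\tfrac{n-x+2}{2}<b=\tfrac{m+n+x+1}{2}$ yields
$$(\ln\al)'(x)<\frac{2q}{x}-\frac12\ln\frac{m+n+x+1}{n-x+2}.$$
Now I would exploit the exact identity $\frac{m+n+x+1}{n-x+2}=\frac{1+t}{1-t}$ with $t=\frac{2x+m-1}{2n+m+3}\in(0,1)$ (the membership holding since $0<x<n+2$), together with the elementary bound $\ln\frac{1+t}{1-t}>2t$ already used elsewhere in the paper, to obtain $\ln\frac{m+n+x+1}{n-x+2}>\frac{2(2x+m-1)}{2n+m+3}$. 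Hence $(\ln\al)'(x)<\frac{2q}{x}-\frac{2x+m-1}{2n+m+3}$, and it suffices to prove $2q(2n+m+3)\le x(2x+m-1)$ on the interval.

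Finally, since the right-hand side $x(2x+m-1)=2x^2+(m-1)x$ is strictly increasing in $x>0$ while the left-hand side is independent of $x$, the inequality for all $x\ge\sqrt{2qn}+2$ reduces to the single endpoint $x_0=\sqrt{2qn}+2$, where a direct expansion gives
$$x_0(2x_0+m-1)-2q(2n+m+3)=(m+7)\sqrt{2qn}+2(m+3)(1-q),$$
whose leading term $(m+7)\sqrt{2qn}$ is positive and dominates the $O(1)$ remainder for large $n$ when $q>0$ (the case $q=0$ being immediate). This gives $(\ln\al)'<0$ throughout.

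The main obstacle is that in the critical regime $x\sim\sqrt{2qn}$ the two contributions $\frac{2q}{x}$ and $\frac12\big(\psi(a)-\psi(b)\big)$ are both of size $n^{-1/2}$ and cancel to leading order — the threshold $\sqrt{2qn}$ is exactly the balance point $2x^2=4qn$ — so the sign of $(\ln\al)'$ is decided by the $O(n^{-1})$ correction. The estimates must therefore be sharp enough to retain the decisive term; the two bounds above ($\psi'(t)>1/t$ and $\ln\frac{1+t}{1-t}>2t$) are calibrated precisely for this, and the shift by $2$ in the left endpoint supplies comfortable slack in the endpoint inequality.
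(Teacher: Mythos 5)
Your proof is correct, and it takes a genuinely different route from the paper's. You bound the logarithmic derivative directly: the digamma inequality $\Psi(b)-\Psi(a)>\ln(b/a)$ (from $\Psi'(t)>1/t$, where $\Psi=(\ln\Ga)'$), the exact identity $\frac{m+n+x+1}{n-x+2}=\frac{1+t}{1-t}$ with $t=\frac{2x+m-1}{2n+m+3}$, and the elementary bound $\ln\frac{1+t}{1-t}>2t$ reduce the claim to the polynomial inequality $2q(2n+m+3)\le x(2x+m-1)$, which by monotonicity of the right-hand side need only be checked at $x_0=\sqrt{2qn}+2$; your endpoint identity $x_0(2x_0+m-1)-2q(2n+m+3)=(m+7)\sqrt{2qn}+2(m+3)(1-q)$ is exact and positive for large $n$. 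The paper argues differently: since $\ln\Ga$ is convex, $\ln\al$ is strictly concave on $(0,n)$, and the two-step ratio $\vf(x)=\al(x+2)/\al(x)=\left(1+\frac{2}{x}\right)^{2q}\frac{n-x}{n+x+m+1}$ has a Gamma-free closed form via the functional equation; an asymptotic expansion gives $\vf(\sqrt{2qn})<1$ for large $n$, and since $\vf\to\infty$ as $x\to0$ (for $q>0$) there is $b<\sqrt{2qn}$ with $\vf(b)=1$, hence a critical point of $\al$ in $(b,b+2)$, which by strict concavity is the unique maximum, so $\al$ decreases on $(\sqrt{2qn}+2,n)$; the case $q=0$ is treated separately via monotonicity of $\Psi$. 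The paper's route reuses the same ratio machinery as Lemma~\ref{tdcconc}, Theorem~\ref{limitnu}, and Proposition~\ref{nuabove}, and yields the extra structural fact that $\al$ is log-concave with a unique interior maximum near the threshold; your route is more self-contained and quantitative — no $O(\cdot)$ expansions, explicit constants showing how large $n$ must be (indeed no largeness condition at all when $0\le q\le1$), and estimates just sharp enough to survive the leading-order cancellation at $x\sim\sqrt{2qn}$, which you rightly single out as the delicate point.
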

\begin{proof}
Since $\ln\Ga$ is strictly convex, the function $\ln\tau(x)$  is
strictly concave on $(0,n)$. Hence the same is true for
$\ln\al(x)$.  Set
\begin{eqnarray*}
\vf(x)=\frac{\al(x+2)}{\al(x)}
=\left(1+\frac{2}{x}\right)^{2q}\frac{n-x}{n+x+m+1}.
\end{eqnarray*}

Let $q>0$. Then $\vf(x)\to\infty$ as $x\to0$. Hence the inequality
$\vf(a)<1$ for $a\in(0,n)$ implies $\vf(b)=1$ for some
$b\in(0,a)$. Then $\al$ has a critical point in
$(b,b+2)$. Thus, $\al$ decreases on $(a+2,n)$ if $\vf(a)<1$.
A calculation shows that
\begin{eqnarray*}
\vf(\sqrt{2qn})=1-\left(m+1+\frac{1}{q}\right)\frac{1}{n}
+O\left(n^{-\frac32}\right).
\end{eqnarray*}
Hence $\vf(\sqrt{2qn})<1$ for large $n$.

If $q=0$, then $\al(x)=\tau(x)$. Since
$(\ln\tau)'(x)=\frac12\Psi\left(\frac{n-x+2}{2}\right)
-\frac12\Psi\left(\frac{n+x+m+1}{2}\right)$, where
$\Psi=(\ln\Ga)'$, and $(\ln\Ga)''(x)>0$, we get $\tau'(x)<0$ for
all $x>0$. This concludes the proof of the lemma.
\end{proof}

\begin{lemma}
Let $x=x_1\oplus x_2\oplus x_3$ correspond to the decomposition
$\bbR^d=\bbR^{d_1}\oplus\bbR^{d_2}\oplus\bbR^{d_3}$, $a>-d_1$, and
$d_2>2$. Then
\begin{eqnarray}\label{doned}
\int_{S^{d-1}}|x_1|^a\,dx=
\frac{\Ga\left(\frac{d_1+a}{2}\right)\Ga\left(\frac{d}{2}\right)}
{\Ga\left(\frac{d+a}{2}\right)\Ga\left(\frac{d_1}{2}\right)},\\
\label{frain}\int_{S^{d-1}}\frac{|x_1|^2}{|x_2|^2}\,dx=
\frac{d_1}{d_2-2}.
\end{eqnarray}
\end{lemma}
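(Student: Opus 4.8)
The plan is to reduce both identities to elementary moment computations for standard Gaussian vectors, exploiting that the normalized Gaussian is uniform on the sphere. Let $G=(G_1,G_2,G_3)$ be a standard Gaussian random vector in $\bbR^d=\bbR^{d_1}\oplus\bbR^{d_2}\oplus\bbR^{d_3}$, so that the blocks $G_1,G_2,G_3$ are independent Gaussian vectors in $\bbR^{d_1},\bbR^{d_2},\bbR^{d_3}$. Then $x=G/|G|$ is distributed according to the invariant probability measure on $S^{d-1}$, and the radius $|G|$ is independent of the direction $x$. Writing $x_i=G_i/|G|$ yields the two identities I will use: $|x_1|=|G_1|/|G|$, and $|x_1|^2/|x_2|^2=|G_1|^2/|G_2|^2$, the latter because the common factor $|G|^{-2}$ cancels.

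For (\ref{doned}), the relation $|x_1|^a=|G_1|^a|G|^{-a}$ together with the independence of $|G|$ from $x$ gives $\sfE|G_1|^a=\sfE(|x_1|^a)\,\sfE|G|^a$, hence
\[
\int_{S^{d-1}}|x_1|^a\,dx=\sfE\big(|x_1|^a\big)=\frac{\sfE|G_1|^a}{\sfE|G|^a}.
\]
Now $|G_1|^2$ and $|G|^2$ are chi-squared variables with $d_1$ and $d$ degrees of freedom, and the standard moment formula $\sfE(\chi^2_k)^{s}=2^{s}\Ga\!\left(s+\tfrac{k}{2}\right)/\Ga\!\left(\tfrac{k}{2}\right)$, valid for $s>-\tfrac{k}{2}$, converts the ratio (with $s=\tfrac{a}{2}$) into $\Ga\!\left(\tfrac{d_1+a}{2}\right)\Ga\!\left(\tfrac{d}{2}\right)\big/\big(\Ga\!\left(\tfrac{d+a}{2}\right)\Ga\!\left(\tfrac{d_1}{2}\right)\big)$, the factors $2^{a/2}$ cancelling. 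The hypothesis $a>-d_1$ is precisely the condition ensuring $\sfE|G_1|^a<\infty$.

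For (\ref{frain}), the identity $|x_1|^2/|x_2|^2=|G_1|^2/|G_2|^2$ and the independence of the blocks $G_1,G_2$ give
\[
\int_{S^{d-1}}\frac{|x_1|^2}{|x_2|^2}\,dx=\sfE|G_1|^2\cdot\sfE|G_2|^{-2}.
\]
Here $\sfE|G_1|^2=d_1$, while $\sfE|G_2|^{-2}=2^{-1}\Ga\!\left(\tfrac{d_2}{2}-1\right)/\Ga\!\left(\tfrac{d_2}{2}\right)=\tfrac{1}{d_2-2}$, the assumption $d_2>2$ being exactly what makes this negative moment finite. The product equals $\tfrac{d_1}{d_2-2}$.

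There is no genuine obstacle here; the only points requiring care are the justification that $|G|$ is independent of the direction $x$ (so the expectation factors in (\ref{doned})) and the \emph{observation} that in (\ref{frain}) the radius cancels outright, so that one never needs the joint law of $|x_1|$ and $|x_2|$ on the sphere but only a product of independent-block moments. Equivalently, one could note that $(|x_1|^2,|x_2|^2,|x_3|^2)$ follows the Dirichlet law with parameters $\left(\tfrac{d_1}{2},\tfrac{d_2}{2},\tfrac{d_3}{2}\right)$ and read off both formulas from the Dirichlet moment identity, but the Gaussian factorization is the most economical route.
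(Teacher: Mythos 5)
Your proof is correct, and in fact the paper gives no proof at all to compare against: the lemma is stated there with the remark ``We omit the standard proof.'' Your Gaussian-factorization argument (uniform measure on $S^{d-1}$ as the direction of a standard Gaussian, independence of radius and direction for (\ref{doned}), outright cancellation of the radius and independence of the blocks for (\ref{frain}), then the chi-squared moment formula) is precisely the kind of standard computation the author had in mind, and you invoke the hypotheses $a>-d_1$ and $d_2>2$ exactly where they are needed, to guarantee finiteness of $\sfE|G_1|^a$ and $\sfE|G_2|^{-2}$ respectively.
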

We omit the standard proof.
\begin{theorem}\label{lower}
Let $l_n$ be a sequence of positive integers such that $l_n<n$,
\begin{eqnarray}
\limsup_{n\to\infty}\frac{l_n}{n}<1,\label{boulnn}\\
\lim_{n\to\infty}n^{m+2q}e^{-\frac{l_n^2}{n}}=0.\label{lngrow}
\end{eqnarray}
Suppose that $t_n>0$ satisfy the conditions
\begin{eqnarray}\label{condt}
\lim_{n\to\infty}n^mt_n^{-4}
=\lim_{n\to\infty}t_n^4n^{2q}e^{-\frac{l_n^2}{n}}=0.
\end{eqnarray}
Then there exist $A,B>0$, where $A$ depends only on $q$ and $B$
depends only on the sequence $l_n$, such that for
\begin{eqnarray*}
\eta_n=An^{\frac{m}{2}}t_n^{-2},\\
\ep_n=Bt_nn^{\frac{q}{2}}e^{-\frac{l_n^2}{4n}}
\end{eqnarray*}
we have $\lim_{n\to\infty}\ep_n=\lim_{n\to\infty}\eta_n=0$ and for
any sufficiently large $n$ the inequality
\begin{eqnarray}\label{distvx}
\dist_{H^q}(x,\cP_{l_n})<\ep_n|x|_{q},
\end{eqnarray}
holds for the random Kostlan--Shub--Smale polynomial $x\in\cP_n$
with the probability greater than $1-\eta_n$.
\end{theorem}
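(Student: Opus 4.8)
The plan is to reduce the inequality (\ref{distvx}) to a tail bound for a weighted sum of independent chi-square variables, controlling the ``tail'' $j>l_n$ from above by Markov's inequality and the ``head'' $j\le l_n$ from below by a second moment estimate. Write $x=\sum_{j\in J_n}x_j$ as in (\ref{harmdec}), with $x_j$ the component of $x$ in $|x|^{n-j}\cH_j$, and put $d_j=\dim\cH_j$. Since the $\cH_j$ are $\De_{S^m}$-eigenspaces, the $H^q$ inner product is diagonal in this decomposition; dropping the components with $j>l_n$ leaves a polynomial in $\cP_{l_n}$, so, writing $|\ |$ for the $L^2(S^m)$ norm,
\[
\dist_{H^q}(x,\cP_{l_n})^2\le\sum_{j>l_n}\la_j^q|x_j|^2=:D,\qquad |x|_q^2=S'+D,\quad S'=\sum_{j\le l_n}\la_j^q|x_j|^2,
\]
all sums over $j\in J_n$ (the terms $j=0,1$ are harmless). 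Since $\dist_{H^q}(x,\cP_{l_n})^2/|x|_q^2\le D/(S'+D)\le D/S'$, it suffices to prove $\bbP(D\ge\ep_n^2S')\le\eta_n$, and here $D$ and $S'$ are \emph{independent}, being built from disjoint families of components.

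By (\ref{nnorms}), $|x_j|^2=\tau_j\wtd{|x_j|}^2$; in the Kostlan--Shub--Smale model the coordinates of $x_j$ in a $\wtd{|\ |}$-orthonormal basis of $\cH_j$ are independent $N(0,1/2)$, so $\hat Z_j:=2\,n!\,|x_j|^2=n!\,\tau_j\,\chi^2_{d_j}$ with the $\chi^2_{d_j}$ independent. Since $\td\nu_j=n!\,\tau_j d_j$ by Proposition~\ref{kss-l2} and (\ref{cksqua}),
\[
\sfE\hat Z_j=\td\nu_j,\qquad \Var\hat Z_j=\frac{2}{d_j}\,\td\nu_j^2 .
\]
Setting $\hat D=\sum_{j>l_n}\la_j^q\hat Z_j$ and $\hat S'=\sum_{j\le l_n}\la_j^q\hat Z_j$, one has $D/S'=\hat D/\hat S'$, so the common factor $2n!$ drops out and it remains to bound $\bbP(\hat D\ge\ep_n^2\hat S')$.

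For the numerator I would estimate $\sfE\hat D=\sum_{j>l_n}\la_j^q\td\nu_j$ through Proposition~\ref{nuabove}. Because $l_n$ lies far to the right of the peak $\mu_n=\sqrt{(m-1)n}$ by (\ref{lngrow}), the bound (\ref{estnuab}) gives $\td\nu_j\lesssim\bar\nu_n(j/\mu_n)^{m-1}e^{(\mu_n^2-j^2)/(2n)}$ for $j>l_n$; summing $\la_j^q\td\nu_j\approx j^{2q}\td\nu_j$ against this (the function $j\mapsto j^{2q+m-1}e^{-j^2/(2n)}$ decreases for $j>l_n$, so the tail is comparable to its first term times the width $n/l_n$) and inserting $\bar\nu_n\sim A_m n^{-1/2}$ from (\ref{asynun}) yields
\[
\sfE\hat D\;\lesssim\;n^{1-\frac{m}{2}}\,l_n^{\,2q+m-2}\,e^{-l_n^2/(2n)}.
\]
Markov's inequality then controls $\bbP(\hat D\ge\ep_n^2\theta)$ for any threshold $\theta\asymp\sfE\hat S'$; the factor $e^{-l_n^2/(2n)}$ is precisely what produces $e^{-l_n^2/(4n)}$ in $\ep_n$, while the polynomial prefactor is dominated because $l_n<n$.

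For the denominator I would show $\sfE\hat S'\asymp n^q$ and $\Var\hat S'=O(n^{2q-m/2})$. The first holds because $\sum_j\la_j^q\td\nu_j$ is, up to a constant, $\mu_n^{2q}\sim n^q$, which follows from the scaling limit (\ref{limnumu}) of Theorem~\ref{limitnu}; the second uses $\Var\hat Z_j=\frac{2}{d_j}\td\nu_j^2$ together with $d_j\sim j^{m-1}\sim n^{(m-1)/2}$ near the peak, so that the relative variance satisfies $\Var\hat S'/(\sfE\hat S')^2=O(n^{-m/2})$ — the quantitative form of the measure concentration remarked on in the introduction, reflecting $\dim\cP_n\sim n^m$. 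Chebyshev's inequality with a deviation of order $n^{q-m/2}t_n$ then gives $\bbP(\hat S'\le\theta)\le An^{m/2}t_n^{-2}=\eta_n$ for a threshold $\theta=\sfE\hat S'-O(n^{q-m/2}t_n)$ that stays a definite fraction of $\sfE\hat S'\asymp n^q$ exactly in the range permitted by (\ref{condt}). Splitting $\bbP(\hat D\ge\ep_n^2\hat S')\le\bbP(\hat D\ge\ep_n^2\theta)+\bbP(\hat S'\le\theta)$ and fixing $A,B$ then gives the bound $\eta_n$, and (\ref{boulnn})--(\ref{condt}) are exactly what force $\ep_n,\eta_n\to0$ and keep the two contributions consistent. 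I expect the denominator to be the main obstacle: the concentration rate $n^{-m/2}$ requires both the location and height of the peak of $\td\nu_j$ (Theorem~\ref{limitnu}) and the growth $d_j\sim j^{m-1}$ of the harmonic dimensions, and one must verify that the Chebyshev threshold remains comparable to $\sfE\hat S'$ throughout the admissible window for $t_n$; the numerator tail is by comparison a routine consequence of Proposition~\ref{nuabove}.
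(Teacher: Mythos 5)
Your reduction to independent chi-squares, and your Markov estimate for the tail $\hat D$ via Proposition~\ref{nuabove}, are sound and run parallel to what the paper does; the gap is in the denominator step, and it occurs exactly in the regime the theorem is designed to cover. Chebyshev applied to $\hat S'$ can never certify a failure probability below the relative-variance floor $\Var\hat S'/(\sfE\hat S')^2\asymp n^{-m/2}$ while keeping the threshold $\theta$ nonnegative, let alone a definite fraction of $\sfE\hat S'\asymp n^q$. But condition (\ref{condt}) only caps $t_n$ at roughly $n^{-q/2}e^{l_n^2/(4n)}$, which can be exponentially large: take $q=0$, $l_n=\lceil n/2\rceil$, $t_n=e^{n/32}$; then (\ref{boulnn}), (\ref{lngrow}), (\ref{condt}) all hold and $\eta_n=An^{m/2}e^{-n/16}$ is exponentially small. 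To force $\bbP(\hat S'\le\theta)\le\eta_n$ out of Chebyshev you would need a deviation $\delta\gtrsim\sqrt{\Var\hat S'/\eta_n}\asymp n^{q-m/2}t_n$, which is then vastly larger than $\sfE\hat S'\asymp n^q$, so $\theta=\sfE\hat S'-\delta<0$ and the Markov bound on $\bbP(\hat D\ge\ep_n^2\theta)$ is vacuous. Hence your assertion that the threshold ``stays a definite fraction of $\sfE\hat S'$ exactly in the range permitted by (\ref{condt})'' is false; your argument only works for $n^{m/4}\ll t_n\ll n^{m/2}$, which covers the $\sqrt{n\ln n}$ corollaries but not the exponential-decay regime $l_n>\al n$ advertised in the abstract and in the remark following the theorem.

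The paper avoids this floor by never splitting numerator and denominator: it applies Markov once to the \emph{ratio} $\wtd{|v|}^2/\wtd{|u|}^2$, whose expectation is computed exactly as $\dim\cV_n/(\dim\cU_n-2)$ by the sphere integral (\ref{frain}) (equivalently, by independence together with $\sfE(1/\chi^2_d)=1/(d-2)$), giving a bad-event probability $\le An^{m/2}t_n^{-2}$ that scales as $t_n^{-2}$ with no floor. The passage between the Kostlan norm and the $H^q$ norm is then deterministic on the complement of the bad event: the denominator block $\cU_n$ is supported on $j\in[\sqrt{2qn},a\sqrt n]$ precisely so that $\al(x)=x^{2q}\tau(x)$ is decreasing beyond $\sqrt{2qn}+2$ (Lemma~\ref{jqtau}), which yields $|v|_q^2<\al(l_n)\wtd{|v|}^2$ and $|u|_q^2>\al(a\sqrt n)\wtd{|u|}^2$, and the ratio $\tau(l_n)/\tau(a\sqrt n)$ produces the factor $e^{-l_n^2/(2n)}$. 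Your own independence observation already contains the repair: instead of Chebyshev for the lower tail of $\hat S'$, bound $\bbP\big(\hat D\ge\ep_n^2\hat S'\big)\le\ep_n^{-2}\,\sfE(\hat D)\,\sfE(1/\hat S')$, and estimate $\sfE(1/\hat S')$ by minorizing $\hat S'$ with a single rescaled $\chi^2$ over a block of $\gtrsim n^{m/2}$ degrees of freedom on which the weights $\la_j^q\,n!\,\tau_j$ are comparable --- at which point you have essentially reconstructed the paper's proof.
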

\begin{proof}
First of all, we notice that (\ref{condt}) implies
$\ep_n,\eta_n\to0$ as $n\to\infty$.

Let $J_n$ be defined by (\ref{defjnz}). We consider the partition
of $J_n$ by the following subsets of $[0,n]$:
$I_1=[\sqrt{2qn},a\sqrt{n}]$, where $a>\sqrt{2q}$, $I_3=[l_n,n]$,
where $l_n>a\sqrt{n}$, and $I_2=[0,n]\setminus(I_1\mathop\cup I_3)$.
Set
\begin{eqnarray}\label{decuzv}
\cP_n=\cU_n\oplus\cZ_n\oplus\cV_n,
\end{eqnarray}
where each summand is the sum of the spaces $\cH_j$ with $j$
running over the intersection of $J_n$ with one of the sets
$I_1,I_2,I_3$, respectively. For $x\in\cP_n$, let $x=u+z+v$ be the
corresponding decomposition of $x$. Then
\begin{eqnarray*}
|v|_q=\dist_{H^q}(x,\cP_{l_n})
\end{eqnarray*}
since the decomposition (\ref{harmdec}) is orthogonal in the
involved inner products. We shall estimate the probability of the
inequality $|v|<\ep_n|x|$ reducing it to the inequality
$\wtd{|v|}< t_n\wtd{|u|}$ and estimating the expectation of the
ratio $\wtd{|v|}^2/\wtd{|u|}^2$. Since it is homogeneous of degree
$0$, its distribution for the Kostlan--Shub--Smale model and for
the uniform distribution in the unit sphere $\td\cS$ coincide. By
(\ref{frain}),
\begin{eqnarray}\label{expecvu}
\sfE\left(\wtd{|v|}^2/{\wtd{|u|}^{2}}\right)
=\frac{\dim\cV_n}{\dim\,\cU_n-2}.
\end{eqnarray}
According to the decomposition (\ref{harmdec}),
\begin{eqnarray*}
\dim\,\cU_n\geq\dim\cP_{[a\sqrt{n}]-1}-\dim\cP_{[\sqrt{2qn}]}.
\end{eqnarray*}
Replacing $\cV_n$ with $\cP_n$ in
(\ref{expecvu}), we get
\begin{eqnarray*}
\limsup_{n\to\infty}n^{-\frac{m}{2}}
\sfE\left({\wtd{|v|}^2}/{\wtd{|u|}^{2}}\right)\leq
\lim_{n\to\infty}\frac{\dim\cP_n}{n^{\frac{m}{2}}\dim\,\cU_n}
=\frac{1}{a^m-(2q)^{\frac{m}{2}}}.
\end{eqnarray*}
Set
\begin{eqnarray*}
\td V_{t,l}=\{x\in\td\cS:\,\wtd{|v|}>t\wtd{|u|}\}.
\end{eqnarray*}
Let $\td\si$ be the invariant probability measure on $\td\cS$.  Due
to the Chebyshev inequality,
\begin{eqnarray}\label{bgramq}
A>\frac{1}{a^m-(2q)^{\frac{m}{2}}}\kern12pt\Longrightarrow\kern12pt
\td\si\left(\td
V_{t_n,l_n}\right)<An^{\frac{m}{2}}t_n^{-2}\label{tnnmin}
\end{eqnarray}
for all sufficiently large $n$. By (\ref{condt}), $\td\si\left(\td
V_{t_n,l_n}\right)\to0$ as $n\to\infty$. Let
$a>2^{\frac1{m}}\sqrt{2q}$. Then
$a^m-(2q)^{\frac{m}{2}}>(2q)^{\frac{m}{2}}$. Thus for
$A=(2q)^{-\frac{m}{2}}$ the inequality on the right of
(\ref{bgramq}) holds for all sufficiently large $n$.

Let $\al$ be as Lemma~\ref{jqtau}. Then $\al$ decreases on
$(\sqrt{2qn}+2,n)$ and
\begin{eqnarray*}\label{tdtolt}
\begin{array}{r}
|v|_q^2=\sum\limits_{j\in J_n\cap I_3}\al(j)\wtd{|v_j|}^2,\\
|u|_q^2=\sum\limits_{j\in J_n\cap I_1}\al(j)\wtd{|u_j|}^2,
\end{array}
\end{eqnarray*}
where the indices correspond to the decomposition (\ref{harmdec}),
Therefore,
\begin{eqnarray*}\label{vkineq}
\begin{array}{r}
|v|_q^2<\al(l_n)\wtd{|v|}^2,\\
|u|_q^2>\al(a\sqrt{n})\wtd{|u|}^2
\end{array}
\end{eqnarray*}
if $v\neq0$ and $u\neq0$ (we shall assume this in the sequel since
this evidently does not affect the result). Hence $x\notin\td
V_{t_n,l_n}$ implies
\begin{eqnarray}\label{vquqtn}
|v|_q^2
<t_n^2\frac{\al(l_n)}{\al(a\sqrt{n})}|u|_q^2
=t_n^2\left(\frac{l_n^2}{a^2n}\right)^q
\frac{\tau(l_n)}{\tau(a\sqrt{n})}|u|_q^2.
\end{eqnarray}
The ratio $\frac{\tau(l_n)}{\tau(a\sqrt{n})}$ can be estimated as
in Proposition~\ref{nuabove}. The arguments which prove the
inequality (\ref{estrtau}) show that
\begin{eqnarray}\label{lntauaa}
\ln\frac{\tau(l_n+2)}{\tau(a\sqrt{n}+2)}<
{-\int_{a\sqrt{n}}^{l_n}\ln\frac{n+x+m+1}{n-x}\,dx}
<{\frac{a^2}{2}-\frac{l_n^2}{2n}}.
\end{eqnarray}
It follows from the equality $\frac{\tau(x+2)}{\tau(x)}
=\frac{n-x}{n+x+m+1}$  that 
$\lim_{n\to\infty}\frac{\tau(a\sqrt{n}+2)}{\tau(a\sqrt{n})}=1$.
Due  (\ref{boulnn}), for some $b>1$
\begin{eqnarray*}
\limsup_{n\to\infty}\frac{\tau(l_n+2)}{\tau(l_n)}>\frac1b.
\end{eqnarray*}
Hence
\begin{eqnarray}\label{easqtau}
{}\phantom{xxxx}
\limsup_{n\to\infty}\frac{\tau(l_n)e^{\frac{l_n^2}{2n}}}{\tau(a\sqrt{n})}
=\limsup_{n\to\infty}\frac{\tau(l_n+2)e^{\frac{l_n^2}{2n}}}{\tau(a\sqrt{n}+2)}
\frac{\tau(l_n)}{\tau(l_n+2)}\frac{\tau(a\sqrt{n}+2)}{\tau(a\sqrt{n})}
<be^{\frac{a^2}{2}}.
\end{eqnarray}
If $q=0$, then $\left(\frac{l_n^2}{a^2n}\right)^q=1$ and we may
choose arbitrary $a>0$. Thus, the inequality
\begin{eqnarray}\label{vqaquq}
|v|_q^2<Bt_n^2n^qe^{-\frac{l_n^2}{2n}}|u|_q^2
\end{eqnarray}
holds for all sufficiently large $n$ if $B=2b$ and $x\notin\td
V_{t_n,l_n}$ by (\ref{vquqtn}) and (\ref{easqtau}) (we put
$a=\sqrt{2\ln2}$). Let $q>0$. The evident inequality
$\frac{l^2_n}{n}<n$ implies
$\left(\frac{l_n^2}{a^2n}\right)^q<a^{-2q}n^q$. By (\ref{vquqtn})
and the inequalities above, (\ref{vqaquq}) is true provided that
\begin{eqnarray*}
B>a^{-2q}be^{\frac{a^2}{2}}.
\end{eqnarray*}
Set $a=2\sqrt{q}$. Then $a^{-2q}e^{\frac{a^2}{2}}=e^{2q(1-\ln
2-\frac12\ln q)}$. The function on the right attains its maximal
value at $q=\frac{e}{4}$. It is approximately equal to $1.97<2$.
Thus the setting $B=2b$ satisfies (\ref{vqaquq}) for $x\notin\td
V_{t_n,l_n}$ and all $q\geq0$ if $n$ is sufficiently large. This
concludes the proof of the theorem.
\end{proof}
\begin{remark}\rm
The sequence
$t_{n}=n^{\frac{m}{8}-\frac{q}{4}}e^{\frac{l_n^2}{8n}}$ satisfies
(\ref{condt}) if (\ref{lngrow}) is true.
For this choice of $t_n$,
$\eta_n$ is proportional to $\ep_{n}^{2}$. If $l_{n}>\al n$ for
some $\al\in(0,1)$, then both $\ep_{n}$ and $\eta_{n}$ decrease
exponentionally. On the other hand, the expression under the limit
in (\ref{lngrow}) is equal  to $\ep_n^4\eta^2$ up to a
multiplicative constant depending only on $m$ and $q$. Hence
neither $\ep_n^4$ nor $\eta_n^2$ can decay faster than
$n^{m+2q}e^{-\frac{l_n^2}{n}}$.

If $l_n=\sqrt{(m+2q+1)n\ln n}$, then
$n^{m+2q}e^{-\frac{l_n^2}{n}}=\frac1{n}$. For  $t_n=n^a$ we have
$\eta_n=An^{\frac{m}{2}-2a}$, $\ep_n=Bn^{a-\frac{m+1}{4}}$, and
(\ref{condt}) is true if $m<4a<m+1$.
\end{remark}

Let $I$ be an interval in $(0,\infty)$ which may be infinite. Set $I_n=\mu_nI$
and let $\pi_n$ be the orthogonal projection  onto the sum of the spaces $\cH_j$
such that $j\in I_n\cap J_{n}$.
\begin{proposition}\label{approx}
Let $u$ be a random polynomial uniformly distributed in the unit
sphere $\wtd\cS\subseteq\cP_n$  for the norm $\wtd{|\ |}$. Then
\begin{eqnarray}\label{errexp}
\lim_{n\to\infty}\frac{\sfE(|\pi_nu|^2)}{\sfE(|u|^2)}
=\frac1A\int_{I}\left(t^2e^{1-t^2}\right)^{\frac{m-1}{2}}\,dt,
\end{eqnarray}
where $A=\int_0^\infty\left(t^2e^{1-t^2}\right)^{\frac{m-1}{2}}\,dt
=\frac{\Ga\left(\frac{m}{2}\right)}{2\sqrt{e}}
\left(\frac{2e}{m-1}\right)^{\frac{m}{2}}$ and, moreover,
$\sfE(|u|^2)=\frac{m!}{(n+m)!}$.
\end{proposition}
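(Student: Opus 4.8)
The plan is to reduce the ratio to the partial sum $\sum_{j\in I_n\cap J_n}\td\nu_j$ of the coefficients studied in Section~4 and then invoke the scaling limit of Theorem~\ref{limitnu}. I would decompose $u=\sum_{j\in J_n}u_j$ according to (\ref{harmdec}); this decomposition is orthogonal for both $|\ |$ and $\wtd{|\ |}$. Because $u$ is uniform on the unit sphere $\wtd\cS$ and the summand $|x|^{n-j}\cH_j$ is a $\wtd{|\ |}$-orthogonal subspace of dimension $\dim\cH_j=c_j^2$, the rotational symmetry of the uniform measure gives
\begin{eqnarray*}
\sfE\big(\wtd{|u_j|}^2\big)=\frac{\dim\cH_j}{\dim\cP_n}=\frac{c_j^2}{c^2}.
\end{eqnarray*}
By (\ref{nnorms}) one has $|u_j|^2=\tau_j\wtd{|u_j|}^2$, and $\tau_jc_j^2=\td c_j^2=\td\nu_j/n!$ by Proposition~\ref{kss-l2}, so
\begin{eqnarray*}
\sfE\big(|u_j|^2\big)=\tau_j\,\frac{c_j^2}{c^2}=\frac{\td\nu_j}{n!\,\dim\cP_n}.
\end{eqnarray*}

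Next I would sum this identity over $j\in I_n\cap J_n$ to obtain $\sfE(|\pi_nu|^2)$, and over all of $J_n$ to obtain $\sfE(|u|^2)$. Since $\sum_{j\in J_n}\td\nu_j=1$ by (\ref{sumnul}), the latter equals $\frac{1}{n!\,\dim\cP_n}$; inserting $\dim\cP_n={{m+n}\choose{m}}=\frac{(m+n)!}{m!\,n!}$ yields $\sfE(|u|^2)=\frac{m!}{(n+m)!}$, which is the second assertion. The common factor $\frac{1}{n!\,\dim\cP_n}$ then cancels in the ratio, leaving
\begin{eqnarray*}
\frac{\sfE(|\pi_nu|^2)}{\sfE(|u|^2)}=\sum_{j\in I_n\cap J_n}\td\nu_j.
\end{eqnarray*}

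It remains to pass to the limit. Set $g(t)=\big(t^2e^{1-t^2}\big)^{\frac{m-1}{2}}$ and $t_j=j/\mu_n$. Theorem~\ref{limitnu} gives $\frac{\td\nu(\mu_nt)}{\bar\nu_n}\to g(t)$ uniformly on $(0,\infty)$, while (\ref{limnumun}) gives $\mu_n\bar\nu_n\to A_m\sqrt{m-1}$. The indices $j\in J_n$ are spaced by $2$, so the points $t_j$ have mesh $2/\mu_n\to0$, and as $j$ ranges over $I_n\cap J_n$ the $t_j$ run through $I$. Hence
\begin{eqnarray*}
\sum_{j\in I_n\cap J_n}\td\nu_j
=\frac{\mu_n\bar\nu_n}{2}\sum_{j\in I_n\cap J_n}\frac{\td\nu(\mu_nt_j)}{\bar\nu_n}\,\frac{2}{\mu_n}
\;\longrightarrow\;\frac{A_m\sqrt{m-1}}{2}\int_I g(t)\,dt,
\end{eqnarray*}
the last sum being a Riemann sum for $\int_I g$. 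A direct computation from $A_m=\frac{2\sqrt2}{\Ga(\frac m2)}\big(\frac{m-1}{2e}\big)^{\frac{m-1}{2}}$ shows $\frac{A_m\sqrt{m-1}}{2}=\frac1A$, which is exactly (\ref{errexp}).

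The step I expect to be the main obstacle is the Riemann-sum convergence when $I$ is unbounded, say $I=(a,\infty)$: there the number of summands grows without bound and the tail must be controlled. I would handle this precisely as in the proof of Theorem~\ref{limitnu}, combining the uniform convergence on $(0,\infty)$ with the exponential majorant of the type $Ke^{-\eta t}$ supplied by the upper bound of Proposition~\ref{nuabove}, so that Lebesgue's Dominated Convergence Theorem legitimizes interchanging the limit with the tail of the sum. For bounded $I$ the uniform convergence alone suffices and the argument is routine.
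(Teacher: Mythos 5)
Your proposal is correct and follows essentially the same route as the paper: both reduce the ratio to the partial sum $\sum_{j\in I_n\cap J_n}\td\nu_j$ via the orthogonal decomposition (\ref{harmdec}), the second-moment formula for the uniform measure on $\td\cS$ (the paper cites (\ref{doned}), you use the equivalent symmetry argument), and the rescaling factors $\tau_j$, and both pass to the limit by viewing the sum as an integral of piecewise constant functions, using the uniform convergence of Theorem~\ref{limitnu} together with the majorant supplied by Proposition~\ref{nuabove}. The only difference is cosmetic: you make explicit the Riemann-sum bookkeeping and the verification $\frac{A_m\sqrt{m-1}}{2}=\frac1A$ that the paper leaves implicit.
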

\begin{proof}
Applying (\ref{doned}), (\ref{csqua}), and (\ref{cksqua}), we get
\begin{eqnarray*}
\sfE(|\pi_nu|^2)=\sum\limits_{j\in I_n\cap J_n}\int_{\wtd S}|u_j|^2\,du
=\sum\limits_{j\in I_n\cap J_n}\tau_j\int_{\wtd S}\wtd{|u_j|}^2\,du
=\sum\limits_{j\in I_n\cap J_n}\frac{\tau_jc_j^2}{c^2}
\\
=\frac{\td c^2}{c^2}\sum\limits_{j\in I_n\cap J_n}\td\nu_j.
\end{eqnarray*}
Setting $I=(0,\infty)$, we get
$\sfE\left(|u|^2\right)=\frac{\td c^2}{c^2}=\frac{m!}{(n+m)!}$
according to (\ref{sumnul}),  (\ref{csqua}), and (\ref{tdc}).
As in Theorem~\ref{limitnu}, we may treat the sum as the integral of piecewise
constant functions. Proposition~\ref{nuabove} implies the existence of a common
majorant
for them. This proves the convergence of integrals as $n\to\infty$ and the proposition.
\end{proof}

In particular, for $I=(t,\infty)$ we have the integral
$\int_t^\infty\left(\xi^2e^{1-\xi^2}\right)^{\frac{m-1}{2}}\,d\xi$. Using the elementary
inequality $t^2e^{1-t^2}<e^{-(t-1)^2}$ , which holds if $t>0$ and $t\neq1$,
and the standard estimate of the Gaussian integral, we get the following upper bound
for the expectation of the distance in $L^2(S^m)$ from the random uniformly distributed
 polynomial in $\td\cS\subset\cP_n$ to the space $\cP_{l_n}$ which holds if $t>1$,
$l_n>t\mu_n$, $l_n\in J_n$, and $n$ is sufficiently large:
\begin{eqnarray*}
\sfE(\dist(u,\cP_{l_n})^2)<
\frac{e^{-\frac{m-1}{2}(t-1)^2}}{\sqrt{2e(m-1)}\,(t-1)}
\sfE(|u|^2)
\end{eqnarray*}
(we drop the standard calculation). It $I$ is a neighborhood of $1$, then the
integrand in (\ref{errexp}) decays exponentially outside $I$ as $m$ grows.
Using arguments of Proposition~\ref{nuabove}, it is possible to
prove that  $1-\frac{\sfE(|\pi_nu|^2)}{\sfE(|u|^2)}\leq e^{-Km}$ for sufficiently
large $n$, where $K$ depends only on $I$.


\end{document}